\newcommand\supp{\mathop{\rm supp}}
\renewenvironment{proof}[1][\proofname]{{\itshape\bfseries \hspace*{-0.58cm} #1\hspace*{0,05cm}.\hspace*{0,2cm}}}{\qed\vspace{0,3cm}}
\newtheorem{theorem}{Theorem}
\newtheorem{lemma}[theorem]{Lemma}
\newtheorem{proposition}[theorem]{Proposition}
\newtheorem{corollary}[theorem]{Corollary}
\newtheorem{definition}[theorem]{Definition}
\newtheorem{remark}[theorem]{Remark}
\newtheorem*{remark*}{Remark}
\newtheorem*{theorem*}{Theorem}
\begin{document}

\title[Conditions for the absence of blowing up solutions]{Conditions for the absence of blowing up solutions to fractional differential equations}


\author[P. M. Carvalho-Neto]{Paulo M. de Carvalho-Neto}
\address[Paulo M. de Carvalho Neto]{Departamento de Matem\'atica, Centro de Ciências Físicas e Matemáticas, Universidade Federal de Santa Catarina, Florian\'{o}polis - SC, Brazil}
\email[]{paulo.carvalho@ufsc.br}
\author[R. Fehlberg Junior]{Renato Fehlberg Junior}
\address[Renato Fehlberg Junior]{Departamento de Matem\'atica, Universidade Federal do Esp\'{i}rito Santo, Vit\'{o}ria - ES, Brazil}
\email[]{fjrenato@yahoo.com.br}

\subjclass[2010]{ 34A08, 26A33, 45J05}
\keywords{Caputo derivative; fractional differential equations; blow up solutions.}

\begin{abstract}
  When addressing ordinary differential equations in infinite dimensional Banach spaces, an interesting question that arises concerns the existence (or non existence) of blowing up solutions in finite time. In this manuscript we discuss this question for the fractional differential equation $cD_t^\alpha u=f(t,u)$ proving that when $f$ is locally Lipschitz, however does not maps bounded sets into bounded sets, we can construct a maximal local solution that not ``blows up'' in finite time.
\end{abstract}

\maketitle

\section{Introduction}

To introduce the main aspects of this discussion, consider the following differential equation
\begin{equation}\label{ode1}\left\{\begin{array}{ll}u^\prime(t)=f(t,u(t)),&t>t_0,\vspace*{0.2cm}\\
u(t_0)=u_0\in X,\end{array}\right.\end{equation}
where $X$ denotes a Banach space and $t_0\in\mathbb{R}$.

Conditions for the existence of solutions to problem \eqref{ode1} were firstly obtained by Peano in \cite{Pe1}. Following his work, several mathematicians proposed improvements to this result. Nonetheless, to our objectives in this manuscript it worths to present the following version (see \cite{Dei1,Hal1} for details of the proof).
\begin{theorem}\label{odetheo} Let $X$ be a Banach space, $t_0\in\mathbb{R}$ and $f:[t_0,\infty)\times X\rightarrow X$ a continuous function which is locally Lipschitz and bounded. Then problem \eqref{ode1} has a global solution in the interval $[t_0,\infty)$ or there exists a value $\omega\in (t_0,\infty)$ such that $u:[t_0,\omega)\rightarrow X$ is a maximal local solution that satisfies
$$\limsup_{t\rightarrow\omega^{-}}\|u(t)\|_X=\infty.$$
\end{theorem}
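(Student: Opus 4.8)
The plan is to establish the result through the classical continuation scheme, relying on local solvability and a contradiction argument at the right endpoint.

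First I would record local existence and uniqueness. Rewriting \eqref{ode1} as the integral equation $u(t)=u_0+\int_{t_0}^t f(s,u(s))\,ds$, the local Lipschitz hypothesis lets me set up a contraction on $C([t_0,t_0+\delta];X)$ for $\delta$ small, yielding a unique solution on a short interval $[t_0,t_0+\delta]$; completeness of $X$ is what makes the fixed point argument close. I would then pass to the maximal solution by setting $\omega=\sup\{T>t_0 : \eqref{ode1}\text{ admits a solution on } [t_0,T)\}$. By uniqueness any two solutions coincide on the overlap of their domains, so gluing produces a single solution $u:[t_0,\omega)\to X$, maximal by construction. This immediately gives the dichotomy: either $\omega=\infty$, the global case, or $\omega<\infty$.

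It remains to treat $\omega<\infty$ and show $\limsup_{t\to\omega^-}\|u(t)\|_X=\infty$. I would argue by contradiction: assume $\limsup_{t\to\omega^-}\|u(t)\|_X=M<\infty$. Combined with continuity of $u$ on the compact subintervals of $[t_0,\omega)$, this forces a uniform bound $\|u(t)\|_X\le R$ on all of $[t_0,\omega)$. Here the boundedness hypothesis on $f$ enters decisively: since $f$ carries the bounded set $[t_0,\omega]\times \overline{B}_R$ into a bounded set, there is $C>0$ with $\|f(t,u(t))\|_X\le C$, whence $\|u'(t)\|_X\le C$ for every $t\in[t_0,\omega)$. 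Thus $u$ is Lipschitz near $\omega$, and the Cauchy criterion together with completeness of $X$ produces a limit $u_\omega=\lim_{t\to\omega^-}u(t)\in X$. Applying local existence to the initial value problem starting from $(\omega,u_\omega)$ yields a solution on $[\omega,\omega+\eta)$, which glues to $u$ and extends it past $\omega$, contradicting maximality. Hence the blow-up alternative must hold.

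The main obstacle is precisely this final step: extracting the endpoint limit $u_\omega$. In finite dimensions one could lean on compactness of bounded sets, but in an infinite-dimensional Banach space the only available leverage is the derivative bound coming from $f$. This is exactly why the assumption that $f$ sends bounded sets to bounded sets cannot be dropped—and why, when it fails, the phenomenon studied in the remainder of this paper becomes possible.
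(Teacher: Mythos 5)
Your proposal is correct and coincides with the standard continuation argument: the paper does not actually prove Theorem \ref{odetheo} itself (it defers to Deimling and Hale), but your contradiction scheme --- uniform bound on $u$, hence a bound on $f(t,u(t))$, hence a Lipschitz estimate yielding the endpoint limit $u_\omega$ by completeness, then re-solving from $(\omega,u_\omega)$ to contradict maximality --- is precisely the argument of those references, and it is also the same scheme the authors reproduce for the fractional analogue in Theorem \ref{odefractheo}. One small reading point: you (sensibly) interpreted ``bounded'' as ``maps bounded sets into bounded sets,'' which is the hypothesis actually used in Theorem \ref{odefractheo} and the one relevant to the paper's counterexample; under literally globally bounded $f$ your derivative bound would hold outright and the blow-up alternative would be vacuous, so your reading is the right one.
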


Clearly the above result establishes conditions such that the solutions of \eqref{ode1} have a dichotomy property concerning its ``longtime behavior''. Since the need of the Lipschitz condition is classical, it is natural that questions regarding the necessity of the boundedness property of $f$ were raised.

If we focus on finite dimensional spaces, it seems obviously that this hypothesis is dispensable, however in infinite dimensional spaces it plays a fundamental role. Dieudonn\'{e} in \cite{Di1} was most likely the first mathematician to address this question. He considered the Banach space
$$X:=\big\{\{x_n\}_{n=1}^\infty:x_n\in\mathbb{\mathbb{R}}\textrm{ and }\lim_{n\rightarrow\infty}{x_n}=0\big\},$$
 with norm $\|\{x_n\}_{n=1}^\infty\|_X:=\sup_{n\in\mathbb{N}}|x_n|$ and construct a non-bounded and Lipschitz function $f:[0,1]\times X\rightarrow X$ such that $(1)$ posses a local solution that does not admit a continuation and is also bounded.

Following Dieudonn\'{e}'s inspiration, many mathematicians discussed this kind of problem, which in certain sense, is related with the failure of Peano's existence theorem in infinite dimensions. For instance, Deimling improved Dieudonn\'{e} construction considering more general Banach spaces, as can be seen in \cite{Dei1,Dei2}; Komornik et al. in \cite{KoMaPiVa1} addressed the autonomous version of \eqref{ode1}, proving that for any infinite dimensional Banach space $X$ and bounded interval $(s,t)\subset\mathbb{R}$, there exists a locally Lipschitz function $f:X\rightarrow X$ and $u_0\in X$ such that the maximal solution of \eqref{ode1} is exactly defined on $(s,t)$ although it remains bounded on $(s,t)$.

On the other hand, it is worth recalling that fractional differential equations are gaining considerable emphasis in the mathematical society and the equivalent question in this context, besides being a very interesting problem, was still unanswered.

Even if this question seems to have an adaptable proof from the standard case of ordinary differential equations, it does not happen. The non-local characteristic carried by the fractional differential operator is very hard to be manipulated and new arguments are necessary to obtain such result.

In order to fill this gap, we initially recall some results of the fractional differential equations theory and then we exhibit an example of a locally Lipschitz function $f$ that does not map bounded sets onto bounded sets and induces \eqref{ode2}, Fractional Cauchy Problem in Section 2, to possess a maximal local solution that is also bounded.

Finally we present the structure of this paper. In Section \ref{S2} we discuss several important tools concerning the fractional calculus and the respective fractional Cauchy problem, proving the result concerning ``blowing up'' solutions. In Section \ref{S3} we construct the aforementioned counter example, discussing also every functional analysis tools used in the process. We left Section \ref{S4} to address a new perspective to prove Theorem \ref{counterexample}. More specifically, we introduce arguments that allow us to exhibit a constructive proof to Theorem \ref{counterexample}, which, however, have some restrictions.

\section{A Study of Differential Equations with Fractional\\ Caputo Derivative}
\label{S2}

At first let us recall the study of the locally Bochner integrable functions for the Dunford-Schwartz integral with respect to a Banach space (see details in \cite{DuSc1}). Hereafter assume that $S\subset\mathbb{R}$ and $X$ is a Banach space.
 \begin{itemize}
 \item[i)] Denote by $L^1(S,X)$ the set of all measurable functions $x:S\rightarrow X$ such that $\|x(t)\|_X$ is integrable. Furthermore, this set equipped with norm
 $$\|x(t)\|_{L^1(S,X)}:=\displaystyle\int_S{\|x(t)\|_X}\,dt,$$
is a Banach space.

 \item[ii)] Represent by $W^{1,1}(S,X)$ the set of all elements of $L^1(S,X)$ which have weak derivative of order one being in $L^1(S,X)$. This set equipped with norm
 $$\|x(t)\|_{W^{1,1}(S,X)}:= \Bigl(\|x(t)\|^2_{L^1(S,X)}+\|x^\prime(t)\|^2_{L^1(S,X)}\Bigr)^{1/2},$$
is also a Banach space.

 \item[iii)] Finally, $C(S,X)$ denotes the space of the continuous functions $x:S\rightarrow X$. When $S$ is a compact set we define the norm
 $$\|x(t)\|_{C(S,X)}:= \sup_{t\in S}{\|x(t)\|_X},$$
which makes $C(S,X)$ a Banach space.
 \end{itemize}

  \begin{definition}Let $\alpha\in(0,1)$, $\tau\in(0,\infty)$ and $h\in L^1(0,\tau;X)$.
  \begin{itemize}
  \item[i)] The Riemann-Liouville fractional integral of order $\alpha$, is denoted by {$J_t^{\alpha}h(t)$}, and is given by
    $$J_t^{\alpha}h(t):=(h*g_\alpha)(t)=\int_0^t{g_\alpha(t-r)h(r)}\,dr,\qquad \textrm{ a.e. in } [0,\tau],$$
  where function $g_{\alpha}:\mathbb{R}\rightarrow\mathbb{R}$ is given by
\begin{equation*}g_{\alpha}(t)=\left\{\begin{array}{ll}t^{\alpha-1}/\Gamma(\alpha),& t>0,\\
  0,&t\leq0.
  \end{array}\right.\end{equation*}
  \item[ii)] If $h*g_{1-\alpha}\in W^{1,1}(0,\tau;X)$, the { Riemann-Liouville fractional derivative of order $\alpha$} is given by
  $$D_t^{\alpha}h(t):=D_t^{1}J_t^{1-\alpha}h(t)=D_t^1(h*g_{1-\alpha})(t),\qquad \textrm{ a.e. in } [0,\tau],$$
  where $D_t^1=\left(d/dt\right)$.
  \end{itemize}
  \end{definition}

The above definitions were extensively used in the study of fractional calculus (see for instance \cite{LaPeVa1,LiPeJi1,LiLi1,XuJiYa1}) however in this manuscript we discuss only Caputo fractional derivative, which is defined bellow (see for instance \cite{CaPl1,AnCaCaMa1,WaChXi1} for more details).

\begin{definition} Consider real numbers $\alpha\in(0,1)$, $\tau\in(0,\infty)$ and function $h\in C([0,\tau],X)$ satisfying $h*g_{1-\alpha}\in W^{1,1}(0,\tau;X)$. We define the {Caputo fractional derivative of order $\alpha$}, which is denoted by {$cD_t^{\alpha}h(t)$}, by
  $$cD_t^{\alpha}h(t):=D_t^{\alpha}\big(h(t)-h(0)\big),\qquad \textrm{ a.e. in } [0,\tau].$$
  \end{definition}

\begin{remark} \label{prop} It is important to emphasize some facts at this moment.
\begin{itemize}
\item[i)] In \cite{Car1} the author reproduces the classical proof that ensures, for an integrable function $h\in L^1(0,\tau;X)$, that $D_t^{\alpha}J_t^{\alpha}h(t)=h(t)$. Moreover, if it holds that $h*g_{1-\alpha}$ also belongs to $W^{1,1}(0,\tau;X)$, then
    $$J_t^{\alpha}D_t^{\alpha}h(t)=h(t)-\dfrac{1}{\Gamma(\alpha)}t^{\alpha-1}\left\{J_s^{1-\alpha}h(s)\right\}\big|_{s=0},\qquad \textrm{ a.e. in } [0,\tau].$$
    It is important to notice that $\left\{J_s^{1-\alpha}h(s)\right\}\big|_{s=0}$ can not even be computable; choose for instance $h(t)=t^{-1/2}$.
\item[ii)]For $h\in C([0,\tau],X)$, it holds that
    $$cD_t^{\alpha}J_t^{\alpha}h(t)=h(t).$$
    Furthermore, if $h*g_{1-\alpha}\in W^{1,1}(0,\tau;X)$ we conclude
    $$J_t^{\alpha}cD_t^{\alpha}h(t)=
    h(t)-h(0)-\dfrac{1}{\Gamma(\alpha)}t^{\alpha-1}\left\{J_s^{1-\alpha}[h(s)-h(0)]\right\}\big|_{s=0},$$
    a.e. in $[0,\tau]$. Since $\left\{J_s^{1-\alpha}[h(s)-h(0)]\right\}\big|_{s=0}=0$, we achieve the equality
    $$J_t^{\alpha}cD_t^{\alpha}h(t)=h(t)-h(0).$$
\item[iii)] Finally, if $u\in C([0,\tau],X)$ is a function that satisfies $u^\prime\in C([0,\tau],X)$ we obtain $cD_t^\alpha u(t)=J_{t}^{1-\alpha}u^\prime(t)$ for any $t\in(0,\tau]$. This was the first formal definition of the Caputo fractional derivative.
\end{itemize}
\end{remark}

Once the main tools concerning fractional calculus are introduced, we now formalize the fractional differential equation that we study in this manuscript. The fractional Cauchy problem (FCP) is given by
\begin{equation}\tag{FCP}\label{ode2}\left\{\begin{array}{ll}cD_t^\alpha u(t)=f(t,u(t)),&t>0,\\
u(0)=u_0\in X,\end{array}\right.\end{equation}
where $\alpha$ is a real number in $(0,1)$, $cD_t^\alpha$ is the Caputo fractional derivative of order $\alpha$ and $f:[0,\infty)\times X\rightarrow X$ is a continuous function.

Now it is necessary to define the notion of solution to problem \eqref{ode2}, which indeed is given by an adaptation of the classical ideas that are applied to ordinary differential equations.

\begin{definition}\label{defisol} Assume that $\alpha\in(0,1)$.
\begin{itemize}
\item[$i)$] We say that a function $u:[0,\infty)\rightarrow X$ is a global solution of \eqref{ode2} if
\begin{equation*}u\in C^{\alpha}([0,\tau],X):=\{u\in C([0,\tau],X):cD_t^\alpha u\in C([0,\tau],X)\}\end{equation*}
for every $\tau>0$ and satisfies the equations of \eqref{ode2}.\vspace{0.2cm}
\item[$ii)$] If there exists $0<\tau<\infty$ such that a continuous function $u:[0,\tau]\rightarrow X$ belongs to $C^{\alpha}([0,\tau],X)$ and satisfies \eqref{ode2} for $t\in[0,\tau]$, we say that $u$ is a local solution to problem \eqref{ode2} on the interval $[0,\tau]$.
\end{itemize}
\end{definition}

\begin{remark} Observe that, viewed as a subspace of $C([0,\tau],X)$, the space $C^{\alpha}([0,\tau],X)$ is a Banach space.
\end{remark}

Bearing these definitions in mind, we present the classical result that discuss the local existence and uniqueness of a solution to the fractional differential equation (a proof of this theorem can be found in \cite{Car1,KiSrTr1}).

\begin{theorem}\label{exisuni1} Assume that $\alpha\in(0,1)$, $f:[0,\infty)\times X\rightarrow X$ is a continuous function and $u_0\in X$. If $f$ is also a locally Lipschitz function, i.e., given $(t_0,x_0)\in[0,\infty)\times X$ there exist $L,r>0$ (depending on $f$, $t_0$ and $x_0$) such that for any $(t,x),(t,y)\in B_{r}(t_0,x_0)$ it holds that
$$\|f(t,x)-f(t,y)\|_X\leq L\|x-y\|_X,$$
then there exists $\tau>0$ such that problem \eqref{ode2} posses a unique local solution $u$ in $[0,\tau]$.
\end{theorem}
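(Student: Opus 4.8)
The plan is to recast the Fractional Cauchy Problem \eqref{ode2} as a fixed point problem for a Volterra-type integral operator and then invoke the Banach contraction principle in a suitable ball of $C([0,\tau],X)$. First I would use Remark \ref{prop}(ii) to establish the equivalence between the differential and integral formulations. Indeed, if $u\in C^\alpha([0,\tau],X)$ solves \eqref{ode2}, applying $J_t^\alpha$ to $cD_t^\alpha u=f(t,u)$ and using $J_t^\alpha cD_t^\alpha u(t)=u(t)-u(0)$ yields
$$u(t)=u_0+\frac{1}{\Gamma(\alpha)}\int_0^t (t-s)^{\alpha-1}f(s,u(s))\,ds.$$
Conversely, for a continuous $u$ satisfying this integral equation, the identity $cD_t^\alpha J_t^\alpha h=h$ applied to $h=f(\cdot,u(\cdot))$ recovers \eqref{ode2} and, since $f$ and $u$ are continuous, shows that $cD_t^\alpha u=f(\cdot,u(\cdot))$ is continuous; hence $u\in C^\alpha([0,\tau],X)$ in the sense of Definition \ref{defisol}. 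It therefore suffices to produce a unique continuous solution of the integral equation.

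Next I would fix the local Lipschitz data $L,r>0$ associated with $(0,u_0)$ and study the operator
$$(\mathcal{T}u)(t):=u_0+\frac{1}{\Gamma(\alpha)}\int_0^t (t-s)^{\alpha-1}f(s,u(s))\,ds$$
on the closed ball $\mathcal{B}_\tau:=\{u\in C([0,\tau],X):\|u-u_0\|_{C([0,\tau],X)}\le\rho\}$, choosing $\rho\le r$ and $\tau$ so that $[0,\tau]\times\{x:\|x-u_0\|_X\le\rho\}\subset B_r(0,u_0)$; being closed in a Banach space, $\mathcal{B}_\tau$ is a complete metric space. To see that $\mathcal{T}$ maps $\mathcal{B}_\tau$ into itself I must bound $f$ on this set \emph{without} appealing to compactness, which fails in infinite dimensions. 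This is done by writing $\|f(s,x)\|_X\le\|f(s,u_0)\|_X+L\|x-u_0\|_X$, so that $M:=\sup_{s\in[0,\tau]}\|f(s,u_0)\|_X+L\rho<\infty$ by continuity of $s\mapsto f(s,u_0)$. Since $\int_0^t(t-s)^{\alpha-1}\,ds=t^\alpha/\alpha$, I obtain $\|(\mathcal{T}u)(t)-u_0\|_X\le M\tau^\alpha/\Gamma(\alpha+1)$, which is $\le\rho$ once $\tau$ is small enough.

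For the contraction estimate I would use the Lipschitz bound directly, obtaining
$$\|(\mathcal{T}u)(t)-(\mathcal{T}v)(t)\|_X\le\frac{L}{\Gamma(\alpha)}\int_0^t(t-s)^{\alpha-1}\|u(s)-v(s)\|_X\,ds\le\frac{L\tau^\alpha}{\Gamma(\alpha+1)}\|u-v\|_{C([0,\tau],X)},$$
so that, after further shrinking $\tau$ so that $L\tau^\alpha/\Gamma(\alpha+1)<1$, the map $\mathcal{T}$ becomes a contraction. The Banach fixed point theorem then yields a unique $u\in\mathcal{B}_\tau$ with $\mathcal{T}u=u$, which by the equivalence above is a local solution of \eqref{ode2} on $[0,\tau]$. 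Uniqueness among all local solutions follows because any local solution $v$ satisfies the same integral equation with $v(0)=u_0$, and by continuity $\|v(t)-u_0\|_X\le\rho$ on a possibly smaller interval, placing $v$ inside $\mathcal{B}_\tau$, where the fixed point is unique.

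The principal obstacle I anticipate is not the contraction mechanics but two features specific to the fractional, infinite dimensional setting. First, the classical ODE argument through the fundamental theorem of calculus is unavailable, so the equivalence between \eqref{ode2} and the integral equation — and in particular the verification that the fixed point genuinely lies in $C^\alpha([0,\tau],X)$ with a well-defined, continuous Caputo derivative — must be routed entirely through the identities $cD_t^\alpha J_t^\alpha h=h$ and $J_t^\alpha cD_t^\alpha h=h-h(0)$ of Remark \ref{prop}, with careful attention to the $W^{1,1}$ regularity that makes $cD_t^\alpha$ meaningful. Second, the singular kernel $(t-s)^{\alpha-1}$ must be controlled; fortunately its integral produces the benign factor $\tau^\alpha/\Gamma(\alpha+1)$, which simultaneously governs the self-mapping and the contraction estimates and vanishes as $\tau\to 0^+$, so that smallness of $\tau$ closes both arguments at once.
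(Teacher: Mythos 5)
The paper itself does not prove Theorem \ref{exisuni1}: it is stated as a classical result with the proof deferred to \cite{Car1,KiSrTr1}, and your argument --- equivalence with the Volterra integral equation via the identities of Remark \ref{prop}, then Banach's fixed point theorem on a closed ball of $C([0,\tau],X)$, with the affine bound $\|f(s,x)\|_X\le\|f(s,u_0)\|_X+L\|x-u_0\|_X$ replacing the compactness that is unavailable in infinite dimensions --- is exactly the standard proof found in those references. The equivalence step, the self-mapping estimate and the contraction estimate are all correct, so the existence half is complete.

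One step needs repair: in your final uniqueness paragraph, continuity of a competing solution $v$ only gives $\|v(t)-u_0\|_X\le\rho$ on some smaller interval $[0,\tau'']$ with $\tau''$ depending on $v$, so the argument places $v$ in $\mathcal{B}_{\tau''}$, not in $\mathcal{B}_\tau$, and yields $u=v$ only on $[0,\tau'']$; you cannot then shrink $\tau$, because the theorem requires $\tau$ to be fixed before any competitor $v$ on $[0,\tau]$ is presented. The gap closes mechanically. Choose $\tau$ so that $M\tau^\alpha/\Gamma(\alpha+1)<\rho$ strictly, set $t'':=\sup\{t\in[0,\tau]:\|v(s)-u_0\|_X\le\rho \textrm{ for all } s\in[0,t]\}$, and observe that the restriction of $v$ to $[0,t'']$ is a fixed point of the contraction on $\mathcal{B}_{t''}$, whence $v=u$ on $[0,t'']$. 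If $t''<\tau$, maximality and continuity force $\|v(t'')-u_0\|_X=\rho$, while $v(t'')=u(t'')$ satisfies $\|u(t'')-u_0\|_X\le M\tau^\alpha/\Gamma(\alpha+1)<\rho$, a contradiction; hence $t''=\tau$ and uniqueness holds on all of $[0,\tau]$. (Alternatively, the usual argument with $t^\ast:=\sup\{t:u=v \textrm{ on } [0,t]\}$ also survives the non-locality you worried about: since $u=v$ on $[0,t^\ast]$, the memory integral cancels on $[0,t^\ast]$ and the difference on $[t^\ast,t^\ast+\delta]$ obeys the same contraction-type estimate with kernel $(t-s)^{\alpha-1}$ supported on $[t^\ast,t]$.)
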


The remainder of this section will be devoted to discuss the continuation of local solutions and global solutions of \eqref{ode2}. First, it is necessary to introduce some concepts.

\begin{definition}\label{3.31} Let $u:[0,{\tau}]\rightarrow X$ be a local solution to \eqref{ode2}.
\begin{itemize}
\item[i)] If ${\tau^*}>{\tau}$ and $u^*:[0,{\tau^*}]\rightarrow X$ is a local solution to \eqref{ode2} in $[0,{\tau^*}]$ such that $u(t)=u^*(t)$ in $[0,\tau]$, then we call $u^*$ a continuation of $u$ over $[0,{\tau^*}]$.
\item[ii)] Furthermore, if $u:[0,{{\tau^*}})\rightarrow X$ is the unique local solution to \eqref{ode2} in $[0,{\tau}]$ for every $\tau\in(0,\tau^*)$ and does not have a continuation, then we call it maximal local solution of \eqref{ode2} in $[0,\tau^*)$ (see \cite{St1} for more details on maximal solutions).
\end{itemize}
\end{definition}

Now we are able to establish the existence of continuation to a given solution of \eqref{ode2}.

\begin{theorem} \label{3.4} Let $\alpha\in(0,1)$, $\tau\in(0,\infty)$ and $f:[0,\infty)\times X\rightarrow X$ be as in Theorem \ref{exisuni1}. If $u:[0,\tau]\rightarrow X$ is the unique local solution to \eqref{ode2} in $[0,\tau]$, then there exists a unique continuation $u^*$ of $u$ in $[0,\tau^*]$ for some value $\tau^*>\tau.$
\end{theorem}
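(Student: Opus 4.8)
The plan is to reduce the continuation problem to a fixed point equation for the unknown piece of the solution on $[\tau,\tau^*]$, exploiting the equivalence between \eqref{ode2} and its integral reformulation. By Remark \ref{prop}(ii), any local solution satisfies $u(t)=u_0+J_t^\alpha f(t,u(t))$, and conversely any continuous function obeying this identity with continuous right-hand side lies in $C^\alpha$ and solves \eqref{ode2} (applying $cD_t^\alpha$ and using $cD_t^\alpha J_t^\alpha h=h$). Writing the Riemann--Liouville integral explicitly and splitting it at $\tau$, a continuation $u^*$ must satisfy
\begin{equation*}
u^*(t)=F(t)+\frac{1}{\Gamma(\alpha)}\int_\tau^t (t-s)^{\alpha-1} f(s,u^*(s))\,ds,\qquad t\in[\tau,\tau^*],
\end{equation*}
where the history term $F(t):=u_0+\frac{1}{\Gamma(\alpha)}\int_0^\tau (t-s)^{\alpha-1} f(s,u(s))\,ds$ is completely determined by the known solution $u$ on $[0,\tau]$.

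The crucial observation is that $F(\tau)=u(\tau)$, since $u$ already satisfies the integral equation on $[0,\tau]$; moreover $F$ is continuous on $[\tau,\infty)$, the kernel $(t-s)^{\alpha-1}$ being non-singular there for $s\le\tau<t$ and the limit at $\tau$ being controlled by dominated convergence. First I would fix the Lipschitz data: since $f$ is locally Lipschitz there exist $L,r>0$ with $\|f(t,x)-f(t,y)\|_X\le L\|x-y\|_X$ for $(t,x),(t,y)$ in a ball around $(\tau,u(\tau))$, and a bound $M$ for $\|f\|_X$ on that ball. On $C([\tau,\tau^*],X)$ I would then define the operator $T$ by the right-hand side above and show, for $\tau^*-\tau$ sufficiently small, that $T$ maps the closed ball $\{v:\|v(t)-u(\tau)\|_X\le r\}$ into itself and is a contraction. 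The self-mapping estimate uses $F(\tau)=u(\tau)$ together with continuity of $F$ to keep $F(t)$ near $u(\tau)$, plus the bound $\frac{M}{\Gamma(\alpha+1)}(\tau^*-\tau)^\alpha$ on the new integral; the contraction constant works out to $\frac{L}{\Gamma(\alpha+1)}(\tau^*-\tau)^\alpha$, which is below $1$ for small $\tau^*-\tau$.

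The Banach fixed point theorem then yields a unique $v$ on $[\tau,\tau^*]$, and since the fixed point automatically satisfies $v(\tau)=(Tv)(\tau)=F(\tau)=u(\tau)$, defining $u^*$ as $u$ on $[0,\tau]$ and $v$ on $[\tau,\tau^*]$ produces a continuous function obeying the full integral equation on $[0,\tau^*]$. Hence $cD_t^\alpha u^*=f(\cdot,u^*)\in C([0,\tau^*],X)$ by Remark \ref{prop}(ii), so $u^*\in C^\alpha([0,\tau^*],X)$ is a genuine local solution extending $u$. Uniqueness of the continuation follows from uniqueness of the fixed point: any two continuations on a common interval agree on $[0,\tau]$ by definition and solve the same contraction equation beyond $\tau$.

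The main obstacle is the non-locality of the Caputo derivative: unlike the ordinary case, one cannot simply restart \eqref{ode2} at $\tau$ with initial value $u(\tau)$, because the derivative at times $t>\tau$ retains memory of the entire past. This is exactly what forces the history term $F$ to appear, and the technical heart of the argument is verifying that $F$ is well behaved (continuous up to $\tau$ despite the weakly singular kernel) and that the identity $F(\tau)=u(\tau)$ holds, which is what makes the self-mapping estimate close and permits the two pieces to be glued into a single solution of the non-local equation.
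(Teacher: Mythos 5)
The paper states Theorem \ref{3.4} without proof, deferring to its references \cite{Car1,KiSrTr1}, and your argument is precisely the standard continuation proof found there: pass to the equivalent integral equation via Remark \ref{prop}, split the Riemann--Liouville integral at $\tau$ into the history term $F$ (continuous up to $\tau$ by dominated convergence, since $(t-s)^{\alpha-1}\leq(\tau-s)^{\alpha-1}$ for $s<\tau<t$, and satisfying $F(\tau)=u(\tau)$), and run Banach's fixed point theorem on $[\tau,\tau^*]$ --- correctly identifying the memory term as the feature that distinguishes this from the ordinary restart argument. The proof is correct; the one point worth tightening is the final uniqueness claim, since fixed-point uniqueness holds only among functions in the ball $\{v:\|v(t)-u(\tau)\|_X\leq r\}$, so you should add that any continuation, being continuous and equal to $u(\tau)$ at $t=\tau$, remains in this ball on a possibly smaller interval $[\tau,\tau^*]$, which suffices for the statement as given.
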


Finally, inspired by De Andrade et al. in \cite{AnCaCaMa1} and based on the results discussed above, for the sake of completeness we state and prove the main theorem of this section.

\begin{theorem}\label{odefractheo} Let $\alpha\in(0,1)$, $X$ be a Banach space and $f:[0,\infty)\times X\rightarrow X$ a continuous function which is locally Lipschitz and maps bounded sets onto bounded sets. Then problem \eqref{ode2} has a global solution in the interval $[0,\infty)$ or there exists a value $\omega\in (0,\infty)$ such that the local solution $u:[0,\omega)\rightarrow X$ does not admit a continuation and yet satisfies
$$\limsup_{t\rightarrow\omega^{-}}\|u(t)\|_X=\infty.$$
\end{theorem}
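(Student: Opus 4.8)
The plan is to establish the stated dichotomy by first constructing the maximal local solution and then ruling out the possibility that it terminates at a finite time while remaining bounded. I would begin by setting
$$\omega:=\sup\{\tau>0:\eqref{ode2}\text{ admits a local solution on }[0,\tau]\},$$
which is strictly positive by Theorem \ref{exisuni1}. The uniqueness asserted in Theorem \ref{exisuni1} guarantees that the solutions on the various intervals $[0,\tau]$, $\tau<\omega$, coincide on their common domains, so they glue into a single function $u:[0,\omega)\rightarrow X$ solving \eqref{ode2} on every $[0,\tau]$ with $\tau<\omega$; this is the maximal local solution. If $\omega=\infty$ we are in the first alternative of the statement and nothing remains to prove, so from now on I assume $\omega<\infty$ and argue by contradiction, supposing that $\limsup_{t\rightarrow\omega^-}\|u(t)\|_X<\infty$.

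Under this assumption $u$ is bounded on $[0,\omega)$: the finite limsup bounds $\|u\|_X$ on a left neighbourhood of $\omega$, while continuity bounds it on the complementary compact subinterval, say $\|u(t)\|_X\le K$ throughout. Writing $\phi(t):=f(t,u(t))$, the fact that $f$ maps bounded sets onto bounded sets, applied to the bounded set $[0,\omega]\times\overline{B_K(0)}\subset[0,\infty)\times X$ containing the graph of $u$, yields a constant $N$ with $\|\phi(t)\|_X\le N$ for all $t\in[0,\omega)$. The crucial point, and the one where the nonlocal character of the problem makes itself felt, is to upgrade this bound to the existence of $\lim_{t\rightarrow\omega^-}u(t)$. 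Here I would invoke the Volterra representation $u(t)=u_0+J_t^\alpha\phi(t)$, valid by Remark \ref{prop}(ii), and estimate the increment for $t_1<t_2<\omega$ by splitting
$$\Gamma(\alpha)\bigl(u(t_2)-u(t_1)\bigr)=\int_0^{t_1}\!\bigl[(t_2-s)^{\alpha-1}-(t_1-s)^{\alpha-1}\bigr]\phi(s)\,ds+\int_{t_1}^{t_2}\!(t_2-s)^{\alpha-1}\phi(s)\,ds.$$
In the classical case only a tail integral appears, but here the first term, arising from the $t$-dependence of the kernel $g_\alpha(t-\cdot)$, must also be controlled. Using $\|\phi\|_X\le N$ and the monotonicity of $x\mapsto x^{\alpha-1}$, the right-hand side is dominated in norm by the scalar quantity $N\alpha^{-1}\bigl[t_1^\alpha-t_2^\alpha+2(t_2-t_1)^\alpha\bigr]$, which tends to $0$ as $t_1,t_2\rightarrow\omega^-$; hence $\{u(t)\}$ is Cauchy and $u_\omega:=\lim_{t\rightarrow\omega^-}u(t)$ exists.

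Finally, I would extend $u$ to $[0,\omega]$ by setting $u(\omega):=u_\omega$. Continuity of $f$ makes $\phi$ continuous up to $\omega$, and passing to the limit in the Volterra equation (using the same kernel-difference estimate with $t_2=\omega$) gives $u(\omega)=u_0+J_\omega^\alpha\phi(\omega)$, so the identity $u=u_0+J_t^\alpha\phi$ holds on all of $[0,\omega]$ with $\phi\in C([0,\omega],X)$. Applying $cD_t^\alpha$ and using Remark \ref{prop}(ii) together with $cD_t^\alpha u_0=0$ shows that $cD_t^\alpha u=\phi=f(t,u)\in C([0,\omega],X)$, so $u\in C^\alpha([0,\omega],X)$ is a genuine local solution on the closed interval $[0,\omega]$. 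But then Theorem \ref{3.4} furnishes a continuation of $u$ to some $[0,\omega^*]$ with $\omega^*>\omega$, contradicting the definition of $\omega$ as the supremum of existence times. This contradiction forces $\limsup_{t\rightarrow\omega^-}\|u(t)\|_X=\infty$, completing the dichotomy. I expect the kernel-difference estimate of the third paragraph to be the main obstacle, precisely because the memory term $\int_0^{t_1}[(t_2-s)^{\alpha-1}-(t_1-s)^{\alpha-1}]\phi(s)\,ds$ has no analogue in the first-order theory and is what the authors allude to when noting that the ordinary differential equation argument does not transfer directly.
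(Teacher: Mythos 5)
Your proposal is correct and follows essentially the same route as the paper: define $\omega$ as the supremum of existence times, glue the local solutions by uniqueness, and under the contradiction hypothesis use the boundedness of $f(s,u(s))$ together with the Volterra representation to show $u(t)$ is Cauchy as $t\rightarrow\omega^-$, then extend to $[0,\omega]$ and invoke Theorem \ref{3.4} to contradict maximality. Your kernel-splitting computation, yielding the bound $N\alpha^{-1}\bigl[t_1^\alpha-t_2^\alpha+2(t_2-t_1)^\alpha\bigr]$, is precisely the detail behind the paper's displayed estimate $\frac{M^*}{\Gamma(\alpha+1)}\bigl[|t_n^\alpha+|t_m-t_n|^\alpha-t_m^\alpha|+|t_m-t_n|^\alpha\bigr]$, which the authors leave as ``some computations.''
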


\begin{proof} Consider $H\subset\mathbb{R}$, which is given by
\begin{multline}H:=\big\{\tau\in(0,\infty): \textrm{ there exists } u_\tau:[0,\tau]\rightarrow X\,\\ \text{unique local solution to \eqref{ode2} in } [0,\tau]\big\}.\end{multline}

Define $w=\sup{H}$ and consider function $u:[0,\omega)\rightarrow X$ which is given by $u(t)=u_\tau(t)$, if $t\in[0,\tau]$. It is not difficult to verify that this function is well defined and is the maximal local solution of \eqref{ode2} in $[0,\omega)$.

If $\omega=\infty$, $u$ is a global solution of \eqref{ode2}. Otherwise, if $\omega<\infty$ we need to prove that %
$$\limsup_{t\rightarrow\omega^-}{\|u(t)\|_X}=\infty.$$

The proof is by contradiction. Suppose that there exists $d<\infty$ such that $\|u(t)\|_X\leq d$ for all $t\in[0,\omega)$. Then, since $f$ maps bounded sets onto bounded sets, define
$$M:=\sup_{s\in[0,\omega)}{\|f(s,u(s))\|_X}<\infty$$
and consider $\{t_n\}_n\subset[0,\omega)$ a sequence that converges to $\omega$. Thus, making some computations, we obtain the estimate
$$\|u(t_n)-u(t_m)\|_X\leq \dfrac{M^*}{\Gamma(\alpha+1)}\Big[|{t_n}^\alpha+|t_m-t_n|^\alpha-t_m^\alpha| + |t_m-t_n|^\alpha\Big],$$
for some positive value $M^*$. This ensures that $\{u(t_n)\}_{n=0}^{\infty}$ is a Cauchy sequence and therefore it has a limit, let us say, $u_\omega\in X$. By extending $u$ over $[0,\omega]$, we conclude that the equality
$$u(t)=u_{0}+\int_{0}^{t}{(t-s)^{\alpha-1}f(s,u(s))}\,ds,$$
should hold for all $t\in [0,\omega]$. With this, by Theorem \ref{3.4}, we can extend the solution to some bigger interval, which is a contradiction by the definition of $\omega$. Therefore, if $\omega<\infty$ it holds that $\limsup_{t\rightarrow\omega^-}{\|u(t)\|_X}=\infty.$ This concludes the proof.

\end{proof}

\section{Fundamental Structures and the Bounded Maximal Solution}
\label{S3}
The aim of this section is to recall some fundamental concepts of functional analysis and discuss the existence of a maximal local solution to problem $\eqref{ode2}$ which does not ``blows up'' in finite time, under suitable hypotheses.

For the discussion suggested above to be successfully addressed, we first recall some concepts and notations related to infinite dimensional Banach spaces.

\begin{definition}\label{sch} Let $X$ be a Banach space. A sequence $\{v_n\}_{n=1}^{\infty}\subset X$ is called a Schauder basis of $X$, if for every $x\in X$, there exists a unique sequence $\{x_n\}_{n=1}^{\infty}\subset\mathbb{R}$ such that
\begin{equation}\label{schauder}\lim_{k\rightarrow\infty}{\|x-\sum_{n=1}^{k}x_nv_n\|_X}=0.\end{equation}
We write $x=\displaystyle\sum_{n=1}^{\infty}x_nv_n$ to denote the above limit.
\end{definition}

It worths to emphasize that existence of Schauder basis to general Banach spaces is not a trivial matter. Indeed, as can be found in the literature, it is not true that every Banach space has a Schauder basis (see \cite{En1} for details).

Therefore, it is essential to introduce the following result to better adjust our forward computations.

\begin{theorem}\label{3.660} For any infinite dimensional Banach space $X$, there exists an infinite dimensional closed subspace $X_0$ of $X$ with a Schauder Basis $\{v_n\}_{n=0}^{\infty}$. Moreover, we can suppose that $\{v_n\}_{n=1}^{\infty}$ is such that $\|v_n\|_X=1$, for all $n\in\mathbb{N}$, and that there exists a sequence of linear functionals $\{{v_n}^*\}_{n=1}^{\infty}\subset {X_0}^*$ which satisfies $\|v_n\|_{X_0^*}=1$ and also, for any $x\in X_0$,
\begin{equation}\label{3.662}x=\sum_{n=1}^{\infty}{v_n}^*(x) v_n.
\end{equation}
\end{theorem}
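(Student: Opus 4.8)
The plan is to reduce the statement to the classical Banach--Mazur basic sequence theorem and then to identify the associated coordinate functionals. First I would recall Grunblum's criterion: a sequence $\{v_n\}$ of unit vectors is a Schauder basis of its closed linear span precisely when there is a constant $K\geq 1$ such that
\[
\Big\|\sum_{i=1}^{m}a_i v_i\Big\|_X\leq K\Big\|\sum_{i=1}^{n}a_i v_i\Big\|_X
\]
for all scalars $a_1,\dots,a_n$ and all $m\leq n$. Thus it suffices to produce a sequence of unit vectors satisfying such an inequality; the desired subspace is then $X_0:=\overline{\mathrm{span}}\{v_n\}$, which is infinite dimensional as soon as the $v_n$ are linearly independent, and $\{v_n\}$ is automatically a Schauder basis of $X_0$.

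The heart of the construction is Mazur's selection lemma: given a finite dimensional subspace $F\subset X$ and $\varepsilon>0$, there is a unit vector $v\in X$ with $\|y\|_X\leq(1+\varepsilon)\|y+tv\|_X$ for every $y\in F$ and every scalar $t$. I would prove this by using compactness of the unit sphere of $F$ to extract a finite $\varepsilon$-net $y_1,\dots,y_k$, choosing by Hahn--Banach norming functionals $f_i\in X^*$ with $\|f_i\|_{X^*}=1$ and $f_i(y_i)=1$, and then using that $X$ is infinite dimensional to select a unit vector $v\in\bigcap_i\ker f_i$, a subspace of finite codimension and hence nonzero. Applying this lemma inductively with a summable sequence $\varepsilon_n$, I would build unit vectors $v_{n+1}$ so that the Grunblum inequality holds with $K=\prod_n(1+\varepsilon_n)<\infty$; the span inclusions guarantee linear independence and an infinite dimensional $X_0$.

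Having a normalized Schauder basis of $X_0$, I would define the coordinate functionals $v_n^*$ by biorthogonality, $v_n^*\big(\sum_k a_k v_k\big)=a_n$, so that the expansion $x=\sum_n v_n^*(x)v_n$ holds by the very definition of the basis. Their continuity is the standard consequence of the uniform boundedness of the partial sum projections $P_n x=\sum_{k\leq n}v_k^*(x)v_k$, whose norms are controlled by $K$; since $v_n^*(x)v_n=(P_n-P_{n-1})x$ and $\|v_n\|_X=1$, this yields $\|v_n^*\|_{X_0^*}\leq 2K$, while $v_n^*(v_n)=1=\|v_n\|_X$ forces the lower bound $\|v_n^*\|_{X_0^*}\geq 1$.

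The delicate point, and the step I expect to be the main obstacle, is upgrading this to the exact normalization $\|v_n^*\|_{X_0^*}=1$. Using the identity $\|v_n^*\|_{X_0^*}=\big(\mathrm{dist}(v_n,\overline{\mathrm{span}}\{v_m:m\neq n\})\big)^{-1}$, this amounts to producing an Auerbach-type normalized biorthogonal system, in which each $v_n$ sits at distance exactly one from the closed span of the remaining vectors. I would attempt this by refining the induction: exploiting that finite dimensional subspaces are proximinal together with the Riesz lemma and Hahn--Banach, I would pick each new unit vector in the intersection of the previously chosen kernels and, in parallel, select norm-one functionals $g_n\in X^*$ with $g_n(v_m)=\delta_{nm}$, so that $v_n^*=g_n|_{X_0}$ gives $\|v_n^*\|_{X_0^*}\leq\|g_n\|_{X^*}=1$. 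Reconciling this exact normalization with biorthogonality against all later vectors is the subtle part, since norm attainment is not automatic in a general Banach space, and this is where the careful bookkeeping of the construction concentrates.
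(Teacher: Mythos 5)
Your overall route is sound and, in fact, is precisely the construction hiding behind the paper's ``proof,'' which consists of a single citation to Lindenstrauss--Tzafriri: Mazur's selection lemma (via an $\varepsilon$-net on the sphere of $F$, Hahn--Banach norming functionals, and a unit vector in the finite-codimensional intersection of their kernels), Grunblum's criterion with $K=\prod_n(1+\varepsilon_n)$, and the standard bounds $1\leq\|v_n^*\|_{X_0^*}\leq 2K$ for the coordinate functionals. All of that is correct as written. The genuine gap is exactly where you place it, but your proposed repair does not close it. The classical theorem the paper cites produces a normalized basic sequence whose coordinate functionals satisfy only $\|v_n^*\|_{X_0^*}\leq 2K\leq 2+\varepsilon$; the exact normalization $\|v_n^*\|_{X_0^*}=1$ claimed in the statement is strictly stronger, since by your own distance identity it forces each $v_n$ to lie at distance exactly $1$ from the closed span of \emph{all} the other basis vectors, later ones included. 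Your mechanism --- proximinality of finite-dimensional subspaces plus the exact form of Riesz's lemma, then a norm-one $g_n\in X^*$ with $g_n(v_m)=\delta_{nm}$ --- breaks at the induction step: the exact-Riesz vector $v=(y-p)/\|y-p\|$, with $p$ the best approximation to $y$ from $F_{n-1}=\mathrm{span}\{v_1,\dots,v_{n-1}\}$, generally leaves the constraint subspace $\bigcap_{m<n}\ker g_m$ (one computes $g_m(v)=-g_m(p)/\|y-p\|$), so biorthogonality against the earlier functionals is destroyed unless $p=0$, i.e.\ unless you can find a unit vector \emph{inside} the kernel intersection whose best approximation from $F_{n-1}$ is zero --- and that is not guaranteed in a general Banach space, where norm attainment can fail. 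You flag this tension honestly, but flagging it is not resolving it, so judged against the statement as written the proof is incomplete at this step.

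Two remarks that put the gap in context. First, the paper itself never proves the exact normalization either: \cite[Theorem I.1.2]{LiTz1} yields a basic sequence with basis constant close to $1$, hence coordinate functionals bounded by roughly $2$, not by $1$; the statement here overshoots its own citation. Second, the overshoot is harmless downstream: in Lemma \ref{funcaoH} the only role of $\|v_i^*\|_{X_0^*}=1$ is to get Hahn--Banach extensions with $\|V_i^*\|_{X^*}\leq 1$, and every estimate there (all driven by $\sum_i i^{-2}<\infty$) survives verbatim if $1$ is replaced by a uniform constant $C=2K$. So the mathematically clean fix is not to fight for an Auerbach-type basic sequence but to weaken the normalization in the statement to $\|v_n^*\|_{X_0^*}\leq C$ --- which your argument does prove completely --- and propagate the constant.
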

\begin{proof} It is a classical result. For more details see \cite[Theorem I.1.2]{LiTz1}.
\end{proof}

At this point we are already prepared to discuss the main ideas proposed by this manuscript. Thus, the remainder of this section is dedicated to prove the following:\\

\noindent{\bf Statement:} There exists an element $u_0\in X$ and also a continuous and locally Lipschitz function $f:\mathbb{R}^+\times X\rightarrow X$, which does not maps bounded sets into bounded sets, such that
\begin{equation}\tag{FCP}\left\{\begin{array}{ll}cD_t^\alpha u(t)=f(t,u(t)),&t>0\\
u(0)=u_0\in X.\end{array}\right.\end{equation}
has a maximal bounded local solution in $[0,1)$.\\

A positive answer to the problem above closes any question concerning the adopted hypotheses in Theorem \ref{odefractheo}. It is worth to stress that the ideas which inspired the proof of this question were given by Dieudonn\'{e} in \cite{Di1}, Deimling in \cite{Dei1,Dei2} and Komornik in \cite{KoMaPiVa1}.

\begin{proposition}\label{goodfunction} Given real numbers $s_1<s_2<s_3<s_4$, there exists a continuously differentiable function $z:\mathbb{R}\rightarrow\mathbb{R}$ satisfying $\supp(z)\subset(s_1,s_4)$, $z(t)\equiv1$ for $t\in[s_2,s_3]$, $z^\prime(t)\geq0$ for $t\in[s_1,s_2]$ and $z^\prime(t)\leq0$ for $t\in[s_3,s_4]$.\end{proposition}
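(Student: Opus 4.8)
The plan is to construct $z$ explicitly as a piecewise ``plateau'' function that rises $C^1$-smoothly from $0$ to $1$, remains equal to $1$ across $[s_2,s_3]$, and then descends $C^1$-smoothly back to $0$. The key building block is a single monotone transition of class $C^1$; the standard choice is the Hermite cubic $h(t)=3t^2-2t^3$ on $[0,1]$, which satisfies $h(0)=0$, $h(1)=1$, $h'(0)=h'(1)=0$, and $h'(t)=6t(1-t)\geq 0$ on $[0,1]$. The vanishing of $h'$ at both endpoints is precisely what will allow the several pieces to be glued together so that the assembled $z$ has a continuous derivative everywhere.

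First I would fix two intermediate points $a\in(s_1,s_2)$ and $b\in(s_3,s_4)$ and define
$$z(t)=\begin{cases}0,& t\leq a,\\ h\!\left(\dfrac{t-a}{s_2-a}\right),& a\leq t\leq s_2,\\ 1,& s_2\leq t\leq s_3,\\ h\!\left(\dfrac{b-t}{b-s_3}\right),& s_3\leq t\leq b,\\ 0,& t\geq b.\end{cases}$$
Then I would verify the gluing at the four junctions $a,\,s_2,\,s_3,\,b$: the one-sided values of $z$ agree there (they equal $0$, $1$, $1$, $0$ respectively), and because $h'(0)=h'(1)=0$ the one-sided derivatives all vanish at each junction, so $z\in C^1(\mathbb{R})$. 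Note that only $C^1$ regularity is claimed, which is all the construction delivers (the second derivatives need not match), and all that the statement requires.

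Next I would read off the remaining properties. By construction $z\equiv 1$ on $[s_2,s_3]$. On $[s_1,s_2]$ one has $z'\equiv 0$ on $[s_1,a]$ and $z'(t)=h'\!\big((t-a)/(s_2-a)\big)/(s_2-a)\geq 0$ on $[a,s_2]$, so $z'\geq 0$ throughout that interval; symmetrically, on $[s_3,s_4]$ the negative factor $-1/(b-s_3)$ forces $z'\leq 0$. The one point demanding care — and the only genuine obstacle — is the support requirement $\supp(z)\subset(s_1,s_4)$ rather than the closed interval $[s_1,s_4]$. A naive construction that rises on all of $[s_1,s_2]$ would make $z$ nonzero arbitrarily close to $s_1$, yielding $\supp(z)=[s_1,s_4]$ and violating the open-interval condition. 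Inserting the buffer points $a$ and $b$ removes exactly this difficulty: $z$ vanishes identically on $(-\infty,a]$ and on $[b,\infty)$, whence $\supp(z)\subset[a,b]\subset(s_1,s_4)$, completing the verification. (Equivalently, one could build $z'$ first as the difference of two nonnegative continuous bumps supported in $(a,s_2)$ and $(s_3,b)$ with unit integrals, and set $z(t)=\int_{-\infty}^{t}z'(r)\,dr$; the bookkeeping is identical.)
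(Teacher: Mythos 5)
Your proof is correct, but it takes a genuinely different route from the paper. The paper builds $z$ in one closed formula from the classical $C^\infty$ bump $\theta(t)=e^{-1/t}$ (for $t>0$): it forms bumps $\mu,\nu$ supported in $(s_1,s_2)$ and $(s_3,s_4)$, sets $\eta(t)=\bigl(\int_{-\infty}^{t}\mu\bigr)\bigl(\int_{t}^{\infty}\nu\bigr)$, and normalizes, so that no junction-by-junction gluing check is needed and the resulting $z$ is in fact $C^\infty$. You instead glue Hermite cubics $h(t)=3t^2-2t^3$ at four junctions, which is more elementary (polynomials, no exponentials) but delivers only $C^1$ --- which is all the proposition asks for, and all the rest of the manuscript uses (the later arguments only need $z_n'$ continuous, e.g.\ to write $cD_t^\alpha z_n=J_t^{1-\alpha}z_n'$). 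One point where your version is actually more careful than the paper's: with the standard definition $\supp(z)=\overline{\{t:z(t)\neq0\}}$, the paper's construction yields $z>0$ on all of $(s_1,s_4)$, hence $\supp(z)=[s_1,s_4]$, so the stated inclusion $\supp(z)\subset(s_1,s_4)$ holds there only if one reads $\supp$ as the non-vanishing set itself; your buffer points $a\in(s_1,s_2)$ and $b\in(s_3,s_4)$ give $\supp(z)=[a,b]\subset(s_1,s_4)$ in the strict closure sense. Your parenthetical alternative (prescribing $z'$ as a difference of two bumps and integrating) is essentially the paper's construction in disguise, so both mechanisms are available to you; as written, your gluing argument is complete and correct.
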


\begin{proof} Initially consider any real numbers $s_1<s_2<s_3<s_4$ and the continuously differentiable function $\theta:\mathbb{R}\rightarrow\mathbb{R}$ given by
$$ \theta(t):=\left\{\begin{array}{cl}e^{-1/t},&\textrm{if }t>0,\vspace*{0.2cm}\\0,&\textrm{if }t\leq0.\end{array}\right.$$

Then define functions $\mu(t):=\theta(s_2-t)\theta(t-s_1)$ and $\nu(t):=\theta(s_4-t)\theta(t-s_3)$ to finally obtain
$$ \eta(t):=\left(\displaystyle\int_{-\infty}^t{\mu(s)}\,ds\right)\left(\displaystyle\int^{\infty}_t{\nu(s)}\,ds\right), \textrm{ for }t\in\mathbb{R}.$$

It is not difficult to conclude that $z(t):=\eta(t)/\eta$, where
$$ \eta:=\left(\displaystyle\int_{s_1}^{s_2}{\mu(s)}\,ds\right)\left(\displaystyle\int^{s_4}_{s_3}{\nu(s)}\,ds\right),$$
is such that $z\in C^1(\mathbb{R};\mathbb{R})$, $\supp(z)\subset(s_1,s_4)$ and $z(t)\equiv1$, for $t\in[s_2,s_3]$. The conclusions concerning the sign of the derivative are trivial.

\end{proof}

Last proposition allow us to construct a suitable sequence of functions, which develops a fundamental role forward in the manuscript.

\begin{corollary}\label{goodfunction2} There exist an increasing sequence of positive real numbers $\{t_n\}_{n=1}^{\infty}$ that converges to $1$ and a sequence of continuously differentiable functions $\{z_n(t)\}_{n=1}^{\infty}$, which satisfies the following properties:
\begin{itemize}
\item[i)] The length of the intervals $[t_n,t_{n+1}]$ decreases when $n$ increases. More specifically,
$$t_n-t_{n-1}>t_{n+1}-t_n,$$
for $n\geq2$;\vspace{0.2cm}
\item[ii)] $z_1\equiv 1$ in $\mathbb{R}$ and to $n\geq2$
$$z_n(t):=\left\{\begin{array}{ll}1,&\quad t\in[t_n,t_{n+1}],\\\\
\in(0,1),&\quad t\in\left(\dfrac{t_{n-1}+t_n}{2},t_n\right)\cup\left(t_{n+1},\dfrac{t_{n+1}+t_{n+2}}{2}\right),\\\\
0,&\quad \textrm{otherwise}.
\end{array}\right.$$
\end{itemize}
\end{corollary}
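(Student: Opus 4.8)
The plan is to construct the two sequences explicitly and verify the stated properties. First I would define the sequence $\{t_n\}_{n=1}^\infty$. The requirements are that it is increasing, positive, converges to $1$, and has strictly decreasing gaps $t_n-t_{n-1}>t_{n+1}-t_n$ for $n\geq 2$. A natural choice is to set $t_n := 1 - c^{n}$ for some fixed ratio $c\in(0,1)$ (say $c=1/2$), possibly rescaled so that the sequence starts at a positive value. With this choice one computes directly that $t_{n+1}-t_n = c^n(1-c)$, which is positive, tends to $0$, and is strictly decreasing in $n$, so property (i) holds immediately; moreover $t_n\to 1$. I would record these elementary verifications in a single line each.

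Next I would construct the functions $z_n$. For $n=1$ we simply set $z_1\equiv 1$. For $n\geq 2$, the idea is to apply Proposition~\ref{goodfunction} with the four nodes chosen to match the piecewise description in property (ii). Concretely, I would invoke the proposition with
\[
s_1=\frac{t_{n-1}+t_n}{2},\qquad s_2=t_n,\qquad s_3=t_{n+1},\qquad s_4=\frac{t_{n+1}+t_{n+2}}{2},
\]
which are strictly increasing precisely because $\{t_n\}$ is strictly increasing. The resulting $z_n\in C^1(\mathbb{R};\mathbb{R})$ then satisfies $z_n\equiv 1$ on $[t_n,t_{n+1}]=[s_2,s_3]$, has support contained in $(s_1,s_4)$, and takes values in $[0,1]$ throughout; by the monotonicity of the derivative on the two flanking intervals guaranteed by the proposition, $z_n$ increases from $0$ to $1$ on $(s_1,s_2)$ and decreases from $1$ to $0$ on $(s_3,s_4)$, so it lies strictly in $(0,1)$ on the open transition intervals and equals $0$ outside $(s_1,s_4)$. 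This yields exactly the three-case description in (ii).

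The one point requiring a little care is the claim that the values lie in the open interval $(0,1)$ on the transition regions rather than merely in $[0,1]$. This follows because Proposition~\ref{goodfunction} produces $z_n$ via a quotient of products of the strictly positive bump $\theta$: on the open interval $(s_1,s_2)$ the cumulative integral $\int_{-\infty}^t\mu$ is strictly between $0$ and its maximal value while the second factor is at its full value, so $z_n(t)\in(0,1)$ there, and symmetrically on $(s_3,s_4)$. I do not expect any genuine obstacle here, as this is a direct consequence of the construction in the preceding proposition; the argument is essentially a bookkeeping verification that the chosen nodes are admissible and that the normalization keeps the range in $[0,1]$.

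In summary, the proof is a direct construction: choose $t_n=1-c^n$ (suitably normalized to remain positive and increasing) to obtain (i), and for each $n\geq 2$ apply Proposition~\ref{goodfunction} to the four explicit nodes above to obtain $z_n$ with property (ii), with the endpoint behaviour read off from the sign conditions on $z_n'$.
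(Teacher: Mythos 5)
Your proposal is correct and follows exactly the route the paper intends: the paper's proof is the one-line remark that the corollary is a consequence of Proposition \ref{goodfunction}, and you simply fill in the details (an explicit gap-decreasing sequence such as $t_n=1-2^{-n}$, and the nodes $s_1=\frac{t_{n-1}+t_n}{2}$, $s_2=t_n$, $s_3=t_{n+1}$, $s_4=\frac{t_{n+1}+t_{n+2}}{2}$ fed into the proposition). Your extra care about the strict inclusion $z_n(t)\in(0,1)$ on the transition intervals, justified via the strictly positive factors in the construction of $\eta$ rather than the proposition's bare statement, is a legitimate and welcome refinement of the paper's terse argument.
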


\begin{proof} It is not difficult to notice that this is a consequence of Proposition \ref{goodfunction}.
\end{proof}

Now let us start by considering $X$ an infinite dimensional Banach space. As stated in Theorem \ref{3.660}, we assume that $X_0$ denotes an infinite dimensional closed subspace of $X$ (with the topology induced by the topology of $X$) with a Schauder Basis $\{v_n\}_{n=1}^{\infty}$, which satisfies
$$\|v_n\|_X=1,\quad \forall n\in\mathbb{N}.$$
We also recall that there exists a sequence of linear functionals $\{{v_n}^*\}_{n=1}^{\infty}\subset {X_0}^*$ which satisfies
$$\|v_n^*\|_{X_0^*}=\stackrel[\|x\|_X\leq1]{}{\sup_{x\in X_0}}{|v_n^*(x)|}=1,\quad \forall n\in\mathbb{N},$$
and allow us to write every $x\in X_0$ as the sum
$$x=\sum_{n=1}^{\infty}{v_n}^*(x) v_n.$$

The characteristic function $\chi_{I}:\mathbb{R}\rightarrow\mathbb{R}$ is the function given by
$$\chi_{I}(t)=\left\{\begin{array}{ll}1,&t\in I,\\0,&t\notin I.
\end{array}\right.$$

Based on this last considerations, we prove the following result.

\begin{lemma}\label{solution} The function $u:[0,1)\rightarrow X_0\, (\subset X)$ given by
$$u(t):=\sum_{n=1}^{\infty}z_{n}(t)v_n$$
is continuous and bounded. Moreover, it cannot be extended to $[0,1]$ and if $\alpha\in(0,1)$, it satisfies
$$cD_t^\alpha u(t)=\sum_{n=1}^\infty{\chi_{[t_n,1)}(t)cD_t^\alpha z_{n+1}(t)v_{n+1}}$$
with $cD_t^\alpha u(t)$ a continuous function.
\end{lemma}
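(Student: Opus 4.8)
The plan is to establish the three assertions in turn, using throughout that the construction is locally finite away from $t=1$. Fix $\tau\in(0,1)$. By Corollary \ref{goodfunction2} one has $\supp(z_n)\subset\bigl[(t_{n-1}+t_n)/2,(t_{n+1}+t_{n+2})/2\bigr]$ for $n\ge 2$, and since $t_n\to 1$ only finitely many $z_n$ fail to vanish identically on $[0,\tau]$. Hence on $[0,\tau]$ the series for $u$ collapses to a finite sum $\sum_{n=1}^{N}z_n v_n$ of $C^1$ scalar functions times fixed vectors, so $u\in C^1([0,\tau],X)$; as $\tau<1$ was arbitrary, $u\in C([0,1),X)$. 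For boundedness I would observe from the plateau/transition pattern that at each $t$ at most three of the $z_n$ are nonzero---the constant $z_1\equiv 1$ together with at most two consecutive functions among $\{z_n\}_{n\ge2}$, because the supports of $z_k$ and $z_{k+2}$ meet only at the single point $(t_{k+1}+t_{k+2})/2$, where both vanish. Since $0\le z_n\le 1$ and $\|v_n\|_X=1$, the triangle inequality gives $\|u(t)\|_X\le 3$ for all $t\in[0,1)$.

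To see that $u$ does not extend continuously to $[0,1]$, I use that by uniqueness of the Schauder expansion the coordinate functionals recover the coefficients, that is $v_n^*(u(t))=z_n(t)$ for all $n$ and $t$. Evaluating along $t_n\to 1$ gives $v_n^*(u(t_n))=z_n(t_n)=1$. If a continuous extension existed, then $u(t_n)\to L$ for some $L\in X_0$, whence $|v_n^*(u(t_n))-v_n^*(L)|\le\|v_n^*\|_{X_0^*}\,\|u(t_n)-L\|_X\to 0$; but $v_n^*(L)\to 0$, since convergence of $L=\sum_m v_m^*(L)v_m$ together with $\|v_m\|_X=1$ forces the coefficients $v_m^*(L)$ to tend to zero. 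This produces $1=0$, a contradiction.

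For the Caputo derivative I localize once more. Fix $t\in[0,1)$ and choose $\tau\in(t,1)$; on $[0,\tau]$ the finite-sum representation $u=\sum_{n=1}^{N}z_n v_n$ holds, and since $cD_t^\alpha$ evaluated at a point depends only on the restriction of the function to the interval up to that point, $cD_t^\alpha u(t)$ agrees with the Caputo derivative of this finite sum. Linearity of $cD_t^\alpha$ and the fact that the $v_n$ are constant then give $cD_t^\alpha u(t)=\sum_{n=1}^{N}(cD_t^\alpha z_n)(t)\,v_n$. The term $n=1$ vanishes because $z_1\equiv 1$ yields $cD_t^\alpha z_1\equiv 0$, so reindexing $n\mapsto n+1$ leaves $\sum_{n\ge1}(cD_t^\alpha z_{n+1})(t)\,v_{n+1}$, a finite sum at each $t$. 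Finally, since $z_{n+1}$ and hence $z_{n+1}'$ vanish on $[0,(t_n+t_{n+1})/2]\supset[0,t_n]$, the representation $cD_t^\alpha z_{n+1}=J_t^{1-\alpha}z_{n+1}'$ from Remark \ref{prop}(iii) shows $(cD_t^\alpha z_{n+1})(t)=0$ for $t\le t_n$; thus inserting $\chi_{[t_n,1)}(t)$ alters nothing and produces exactly the asserted identity. Continuity of $cD_t^\alpha u$ on each $[0,\tau]$ then follows from the finite-sum representation together with the standard fact that $J_t^{1-\alpha}$ maps continuous functions to continuous functions.

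The delicate point is that $cD_t^\alpha$ is nonlocal, so term-by-term differentiation of the infinite series is not automatically justified. My way around this is to never differentiate the infinite series directly: the local finiteness of the supports on each $[0,\tau]$ with $\tau<1$, combined with the dependence of $cD_t^\alpha$ at time $t$ only on $[0,t]$, reduces every computation to a finite sum, where linearity applies with no convergence issues. The remaining care is purely bookkeeping---pinning down the exact overlap pattern of the $z_n$ for the bound and the identity $v_n^*(u(t))=z_n(t)$, and the vanishing interval of $cD_t^\alpha z_{n+1}$ that legitimizes the characteristic functions.
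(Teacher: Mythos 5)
Your proposal is correct and follows essentially the same route as the paper: exploiting the locally finite supports of the $z_n$ on each $[0,\tau]$, $\tau<1$, for continuity and boundedness, the non-convergence of the normalized basis $\{v_n\}$ for non-extendability, and the identity $cD_t^\alpha z_k=J_t^{1-\alpha}z_k'$ from Remark \ref{prop} for the derivative formula and its continuity. The only (harmless) variations are that you detect non-extendability by applying the coordinate functionals $v_n^*$ along $t_n$ instead of the paper's evaluation $u\bigl((t_n+t_{n+1})/2\bigr)=v_1+v_n$ at midpoints, and that you justify the term-by-term application of the nonlocal operator $cD_t^\alpha$ more explicitly (via localization to finite sums and the vanishing of $cD_t^\alpha z_{n+1}$ on $[0,t_n]$) than the paper, which simply applies $cD_t^\alpha$ to its piecewise description \eqref{solution01}.
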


\begin{proof} The continuity and the boundedness of $u(t)$ follows since $\|z_n(t)v_n\|_X\leq 1$ for every $n\in\mathbb{N}$ and
\begin{equation}\label{solution01}u(t)=\left\{\begin{array}{ll}v_1,&\textrm{ for }t\in[0,t_1)\vspace*{0.1cm}\\
v_1+z_2(t)v_2,&\textrm{ for }t\in[t_1,t_2)\vspace*{0.1cm}\\
v_1+z_2(t)v_2+z_3(t)v_3,&\textrm{ for }t\in[t_2,t_3)\vspace*{0.1cm}\\
v_1+z_2(t)v_2+z_3(t)v_3+z_4(t)v_4,&\textrm{ for }t\in[t_3,t_4)\vspace*{0.1cm}\\
v_1+z_{k-1}(t)v_{k-1}+z_{k}(t)v_{k}+z_{k+1}(t)v_{k+1},&\textrm{ for }t\in[t_k,t_{k+1})\\
&\textrm{ and }k\geq4.
\end{array}\right.\end{equation}

To verify that $u$ cannot be extended in $[0,1]$ continuously, observe that for $n\geq2$
$$u\left(\dfrac{t_n+t_{n+1}}{2}\right)=v_1+v_{n}.$$
Thus, define $\sigma_n=(t_n+t_{n+1})/2$ and observe that
$$\|u(\sigma_n)-u(\sigma_{n+1})\|_X=\|v_n-v_{n+1}\|_X.$$
Since $\{v_n\}_{n=1}^\infty$ cannot be convergent, the sequence $u\left(\dfrac{t_n+t_{n+1}}{2}\right)$ cannot be a Cauchy sequence, and therefore does not converges. In other words, there is no way to define a value at $t=1$ such that function $u$ becomes continuous in $[0,1]$.

By recalling the already mentioned non-local property of the fractional derivative and applying $cD_t^\alpha$ in \eqref{solution01} we obtain
\begin{equation*}cD_t^\alpha u(t)=\left\{\begin{array}{ll}0,&\textrm{ for }t\in[0,t_1)\vspace*{0.1cm}\\
cD_t^\alpha z_2(t)v_2,&\textrm{ for }t\in[t_1,t_2)\vspace*{0.1cm}\\
cD_t^\alpha z_2(t)v_2+cD_t^\alpha z_3(t)v_3,&\textrm{ for }t\in[t_2,t_3)\vspace*{0.1cm}\\
cD_t^\alpha z_2(t)v_2+cD_t^\alpha z_3(t)v_3+cD_t^\alpha z_4(t)v_4,&\textrm{ for }t\in[t_3,t_4)\vspace*{0.1cm}\\
\displaystyle\sum_{i=2}^{k+1}{cD_t^\alpha z_i(t)v_i},&\textrm{ for }t\in[t_k,t_{k+1})\vspace*{-0,3cm}\\
&\textrm{ and }k\geq4.
\end{array}\right.\end{equation*}

The continuity of $cD_t^\alpha u(t)$ follows from the fact that $z_{k}(t)$ has its first derivative continuous and therefore Remark \ref{prop} allows us to conclude that $cD_t^\alpha z_{k}(t)=J_t^{1-\alpha}z_k^\prime(t)$.
\end{proof}

At this point, it remains for us to address the construction of the function $f_\alpha:\mathbb{R}^+\times X\rightarrow X$. Thus, consider the following preliminary result.

\begin{lemma}\label{funcaoH} Let function $H:[0,\infty)\times X\rightarrow \mathbb{R}$ be given by
\begin{multline*}H(t,x)={V^*_{1}}(x)\chi_{[0,t_1)}(t)\\+\sum_{i=1}^{\infty}\left[i^{-2}{V^*_{i}}(x)\chi_{[t_i,\infty)}(t)+(i+1)^{-2}{V^*_{i+1}}(x)\left(\dfrac{t-t_{i}}{t_{i+1}-t_i}\right)\chi_{[t_i,t_{i+1})}(t)\right],
\end{multline*}
where $V_i^*:X\rightarrow\mathbb{R}$ is an extension of $v_i^*:X_0\rightarrow\mathbb{R}$ over $X$ and satisfies $\|V_i^*\|_{X^*}\leq1$. Then $H$ is a continuous and locally Lipschitz function. \end{lemma}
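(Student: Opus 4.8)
The plan is to exploit two structural features of $H$: for each fixed $t$ the map $x\mapsto H(t,x)$ is linear (a finite or infinite combination of the functionals $V_i^*$), and for each fixed $x$ the map $t\mapsto H(t,x)$ is continuous and piecewise affine. First I would establish that the defining series converges absolutely and uniformly on bounded subsets of $X$. Since $\|V_i^*\|_{X^*}\le1$ gives $|V_i^*(x)|\le\|x\|_X$, each summand is dominated by $i^{-2}\|x\|_X$, and because $\sum_{i=1}^\infty i^{-2}<\infty$ the Weierstrass $M$-test yields convergence uniform for $\|x\|_X\le R$ and uniform in $t$. This makes $H$ well defined, but it does \emph{not} give continuity directly, since the individual summands $\chi_{[t_i,\infty)}$ have jumps; continuity must come from a regrouping in which those jumps cancel.

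The decisive step is therefore to rewrite $H$ in closed form on each interval. A direct bookkeeping of the characteristic functions shows that for $t\in[t_k,t_{k+1})$ one has $\chi_{[t_i,\infty)}(t)=1$ exactly when $i\le k$, while $\chi_{[t_i,t_{i+1})}(t)=1$ only for $i=k$; hence
$$H(t,x)=\sum_{i=1}^{k}i^{-2}V_i^*(x)+(k+1)^{-2}V_{k+1}^*(x)\,\frac{t-t_k}{t_{k+1}-t_k},\qquad t\in[t_k,t_{k+1}),$$
with $H(t,x)=V_1^*(x)$ on $[0,t_1)$ and $H(t,x)=\sum_{i=1}^\infty i^{-2}V_i^*(x)$ for $t\ge1$. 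On each such interval $H$ is affine in $t$ and a finite combination of bounded functionals in $x$, hence jointly continuous there.

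For global continuity I would verify the matching of one-sided limits at the nodes $t_k$: the jump $k^{-2}V_k^*(x)$ produced by $\chi_{[t_k,\infty)}$ at $t=t_k$ is precisely the value reached by the ramp term $k^{-2}V_k^*(x)\,\frac{t-t_{k-1}}{t_k-t_{k-1}}$ on $[t_{k-1},t_k)$ as $t\to t_k^-$; this is exactly why the linear interpolation factors were inserted, and it forces the left limit, the right limit and the value at $t_k$ all to equal $\sum_{i=1}^k i^{-2}V_i^*(x)$. Continuity at the accumulation point $t=1$ then follows from the uniform estimate above: as $t\to1^-$ the index $k\to\infty$, the partial sums $\sum_{i=1}^k i^{-2}V_i^*(x)$ converge to $\sum_{i=1}^\infty i^{-2}V_i^*(x)$ uniformly on bounded $x$-sets, while the leftover ramp term is bounded by $(k+1)^{-2}\|x\|_X\to0$; combined with continuity in $x$ this gives joint continuity at $t=1$, and the constant formula settles $t\ge1$.

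Finally, the Lipschitz property is almost immediate from the closed form. For each fixed $t$, $H(t,\cdot)$ is a bounded linear functional whose norm is controlled by $\sum_{i=1}^{k+1}i^{-2}\le\sum_{i=1}^\infty i^{-2}=\pi^2/6$, uniformly in $t$, so by linearity
$$|H(t,x)-H(t,y)|=|H(t,x-y)|\le\frac{\pi^2}{6}\,\|x-y\|_X$$
for all $t\ge0$ and all $x,y\in X$. This is a global Lipschitz bound in $x$, uniform in $t$, and in particular yields the locally Lipschitz property in the sense of Theorem \ref{exisuni1}. I expect the main obstacle to be the continuity argument rather than the Lipschitz estimate: the genuine work is checking that the discontinuous building blocks $\chi_{[t_i,\infty)}$ assemble into a continuous function, which hinges on the exact cancellation of jumps by the ramp terms at every node $t_k$ together with the uniform tail control near the accumulation point $t=1$.
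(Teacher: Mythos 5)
Your proof is correct, and its backbone---the closed-form rewriting of $H$ on each interval $[t_k,t_{k+1})$, the exact cancellation of the jumps of $\chi_{[t_i,\infty)}$ by the ramp terms at every node, and the uniform tail control near the accumulation point $t=1$---coincides with the paper's argument: the paper states the same piecewise characterization and then checks continuity at $(1,x)$ by an explicit $\epsilon$--$\delta$ estimate, choosing $k_0$ and $\delta_0$ exactly as your uniform bounds render implicit. Where you genuinely diverge is the Lipschitz step. The paper runs a five-case analysis on the location of $\widetilde{t}$ (inside $[0,t_1)$, inside $(t_k,t_{k+1})$, at a node $t_k$, in $(1,\infty)$, and at $t=1$), in each case shrinking the radius $\widetilde{r}$ so the ball stays within one regime and extracting a case-dependent constant $\sum_{i=1}^{k+1}i^{-2}$. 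You instead observe that $H(t,\cdot)$ is a bounded linear functional for every fixed $t$, with norm at most $\sum_{i=1}^{\infty}i^{-2}=\pi^2/6$ uniformly in $t\geq0$, which yields the single global estimate $|H(t,x)-H(t,y)|\leq(\pi^2/6)\|x-y\|_X$ and makes the case analysis superfluous; the same uniform bound pays off again at $t=1$, where splitting $|H(s,y)-H(1,x)|\leq|H(s,y)-H(s,x)|+|H(s,x)-H(1,x)|$ reduces joint continuity to the one-variable limit $s\rightarrow1$, which your partial-sum and ramp estimates settle. Since all the constants produced by the paper's case analysis are dominated by $\pi^2/6$, its local approach buys nothing here; your version is shorter and strictly stronger (global rather than merely local Lipschitz continuity in $x$, uniform in $t$), and in particular it satisfies the hypothesis of Theorem \ref{exisuni1}. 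I see no gap.
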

\begin{proof} Recall that Hahn-Banach theorem allows us to extend our functionals $v_i^*$ to $V_i^*$ defined over $X$ such that $\|V_i^*\|_{X^*}\leq\|v_i^*\|_{X_0^*}=1$. Then, it is interesting to notice that
\begin{equation*}H(t,x)=\left\{\begin{array}{ll}{V^*_{1}}(x),&\textrm{ for }t\in[0,t_1),\vspace{0.2cm}\\
{V^*_{1}}(x)+\frac{1}{2^2}{V^*_{2}}(x)\left(\dfrac{t-t_{1}}{t_{2}-t_1}\right),&\textrm{ for }t\in[t_1,t_2),\vspace{0.2cm}\\
\displaystyle\sum_{i=1}^{k}{{\frac{1}{i^2}V^*_{i}}(x)}+\frac{1}{(k+1)^2}{V^*_{k+1}}(x)\left(\dfrac{t-t_{k}}{t_{k+1}-t_k}\right),&\textrm{ for }t\in[t_k,t_{k+1})\vspace{-0.2cm}\\
&\textrm{ and }k\geq2,\vspace{0.2cm}\\
\displaystyle\sum_{i=1}^{\infty}{{\frac{1}{i^2}V^*_{i}}(x)},&\textrm{ for }t\in[1,\infty).
\end{array}\right.\end{equation*}

Observe that for $(t,x)\in[0,\infty)\times X$, with $t\neq 1$, the continuity of $H$ follows directly from the above characterization. We verify the continuity of $H$ at $(1,x)$ by definition. Thus, given $\epsilon>0$, choose $k_0\in\mathbb{N}$ such that 
$$\left(\sum_{i=k+2}^\infty{\dfrac{\|x\|_X}{i^2}}\right)+\dfrac{1+\|x\|_X}{(k+1)^2}<\dfrac{\epsilon}{2},$$
for any $k\geq k_0$, and let $\delta_0\in(0,1)$ be such that $1-\delta_0>t_{k_0}$. Then, define
$$\delta:=\min{\left\{\delta_0,\dfrac{\epsilon}{2}\left(\sum_{i=1}^\infty{\dfrac{1}{i^2}}\right)^{-1}\right\}}.$$

If $\|(s,y)-(1,x)\|_{[0,\infty)\times X}<\delta$, we conclude that $s>1-\delta_0>t_{k_0}$ and therefore that $s$ lies in an interval of the form $[t_{k-1},t_k)$, for some $k\geq k_0$ or $s\geq1$. In the first situation, we compute

\begin{equation*}\begin{array}{lll}|H(1,x)-H(s,y)| & = & \left|\displaystyle\sum_{i=1}^{\infty}{{\frac{1}{i^2}V^*_{i}}(x)}-\left[\displaystyle\sum_{i=1}^{k}{{\frac{1}{i^2}V^*_{i}}(y)}\right.\right.\vspace*{0.3cm}\\
&&\hspace*{3cm}\left.\left.+\displaystyle\frac{1}{(k+1)^2}{V^*_{k+1}}(y)\left(\dfrac{s-t_{k}}{t_{k+1}-t_k}\right)\right]\right|\vspace*{0.5cm}\\
& \leq & \left(\displaystyle\sum_{i=k+2}^{\infty}{\dfrac{1}{i^2}}\right)\|x\|_X+\left(\displaystyle\sum_{i=1}^{k}{\dfrac{1}{i^2}}\right)\|x-y\|_X\vspace*{0.1cm}\\
&  &
\hspace*{3cm}+\dfrac{1}{(k+1)^2}\left\|x-y\dfrac{s-t_{k}}{t_{k+1}-t_k}\right\|_X\vspace*{0.5cm}\\
& \leq & \left(\displaystyle\sum_{i=k+2}^{\infty}{\dfrac{1}{i^2}}\right)\|x\|_X+\left(\displaystyle\sum_{i=1}^{k+1}{\dfrac{1}{i^2}}\right)\|x-y\|_X\vspace*{0.1cm}\\
& & \hspace*{3cm}+\dfrac{1}{(k+1)^2}\|y\|_X\vspace{0.5cm}\\
& < & \epsilon,
\end{array}\end{equation*}
since $\|y\|_X\leq \|y-x\|_X+\|x\|_X \leq1+\|x\|_X$ and
$$\left|\left(x-y\dfrac{s-t_{k}}{t_{k+1}-t_k}\right)\right|\leq \|x-y\|+\|y\|,$$
while in the second situation we compute
\begin{equation*}\begin{array}{lll}|H(1,x)-H(s,y)| & = & \left|\displaystyle\sum_{i=1}^{\infty}{{\frac{1}{i^2}V^*_{i}}(x)}-\displaystyle\sum_{i=1}^{\infty}{{\frac{1}{i^2}V^*_{i}}(y)}\right|<\epsilon,
\end{array}\end{equation*}
proving the continuity.

To conclude that $H$ is locally Lipschitz, choose any point $(\widetilde{t},\widetilde{x})$ belonging to $[0,\infty)\times X$.\vspace{0.2cm}

{\itshape 1st Case}: If  $\widetilde{t}\in (0,t_1)$, then consider $2\widetilde{r}:=\min{\{\widetilde{t},t_1-\widetilde{t}\}}$ and observe that given pairs $(t,x),(t,y)\in B_{\widetilde{r}}(\widetilde{t},\widetilde{x})$ we obtain that $t\in(0,t_1)$ and therefore
$$|H(t,x)-H(t,y)|\leq\|V^*_1\|_{X^*}\|x-y\|_X.$$

When $\widetilde{t}=0$, the same inequality holds for any $(t,x),(t,y)\in B_{t_1}(\widetilde{t},\widetilde{x})$ with $t\geq 0$.\vspace{0.2cm}

{\itshape 2nd Case}: If $\widetilde{t}\in (t_k,t_{k+1})$ for some $k\geq 1$, define $2\widetilde{r}:=\min{\{\widetilde{t}-t_k,t_{k+1}-\widetilde{t}\}}$ and notice that for $(t,x),(t,y)\in B_{\widetilde{r}}(\widetilde{t},\widetilde{x})$ we obtain that $t\in(t_k,t_{k+1})$ what ensures
$$H(t,x)-H(t,y)=\left[\sum_{i=1}^{k}{{\frac{1}{i^2}V^*_{i}}(x-y)}\right]+\frac{1}{(k+1)^2}{V^*_{k+1}}(x-y)
\left(\dfrac{t-t_{k}}{t_{k+1}-t_k}\right)$$
therefore,
$$|H(t,x)-H(t,y)|\leq \left[\sum_{i=1}^{k+1}{\frac{1}{i^2}\|V_i^*\|_{X^*}}\right]\|x-y\|_X.$$

{\itshape 3rd Case}:  If $\widetilde{t}=t_k$ for $k\geq 1$, by setting $2\widetilde{r}:=\min{\{\widetilde{t}-t_{k-1},t_{k+1}-\widetilde{t}\}}$ and following the above computations, we achieve the same conclusion.

{\itshape 4th Case}: If $\widetilde{t}\in(1,\infty)$ choose $2\widetilde{r}\in(0,\widetilde{t}-1)$ and observe that for any $(t,x),(t,y)\in B_{\widetilde{r}}(\widetilde{t},\widetilde{x})$ it holds
$$|H(t,x)-H(t,y)|\leq \left[\sum_{i=1}^{\infty}{\frac{1}{i^2}\|V_i^*\|_{X^*}}\right]\|x-y\|_X\leq \left[\sum_{i=1}^{\infty}{\frac{1}{i^2}}\right]\|x-y\|_X.$$

{\itshape 5th Case}: If $\widetilde{t}=1$, choose $2\widetilde{r}\in(0,1)$, and the result follows from the above computations, since the image of points $(t,x)$ and $(t,y)$ would be given by an infinite series or a truncated series. This completes the proof.
\end{proof}

\begin{lemma}\label{funcaof} Consider $\alpha\in(0,1)$ and the function $f_\alpha:\mathbb{R}^+\times X\rightarrow X$ given by
$$f_\alpha(t,x)=\phi(H(t,x))cD_t^\alpha u(t),$$
where $H(t,x)$ is described in Lemma \ref{funcaoH} and $\phi(t):\mathbb{R}\rightarrow\mathbb{R}$ is given by
$$\phi(t)=\min{\{t+1,1\}}.$$
Then $f_\alpha$ is a continuous and locally Lipschitz function. \end{lemma}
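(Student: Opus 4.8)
The plan is to regard $f_\alpha$ as the product of a scalar factor and a vector factor and to show that each factor is well behaved. Write $g(t,x):=\phi(H(t,x))$ and $w(t):=cD_t^\alpha u(t)$, so that $f_\alpha(t,x)=g(t,x)\,w(t)$. Three facts will be assembled. First, $\phi(r)=\min\{r+1,1\}$ is continuous and globally Lipschitz with constant one, being the minimum of the affine maps $r\mapsto r+1$ and $r\mapsto 1$, each of Lipschitz constant at most one; in particular $|\phi(a)-\phi(b)|\leq|a-b|$ for all $a,b\in\mathbb{R}$. Second, Lemma \ref{funcaoH} gives that $H$ is continuous and locally Lipschitz in $x$, so $g=\phi\circ H$ is continuous (a composition of continuous maps) and, chaining the two Lipschitz bounds, satisfies $|g(t,x)-g(t,y)|\leq|H(t,x)-H(t,y)|\leq L\|x-y\|_X$ on a suitable ball, where $L$ is a local Lipschitz constant of $H$. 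Third, Lemma \ref{solution} ensures that $w$ is continuous, hence bounded on every compact subinterval of its domain.

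For continuity of $f_\alpha$ at a point $(\widetilde t,\widetilde x)$, I would use the standard product estimate: adding and subtracting $g(t,x)\,w(\widetilde t)$ yields
$$\|f_\alpha(t,x)-f_\alpha(\widetilde t,\widetilde x)\|_X\leq |g(t,x)|\,\|w(t)-w(\widetilde t)\|_X+|g(t,x)-g(\widetilde t,\widetilde x)|\,\|w(\widetilde t)\|_X.$$
As $(t,x)\to(\widetilde t,\widetilde x)$ the scalar $|g(t,x)|$ stays bounded (being continuous, $g$ is locally bounded), $\|w(t)-w(\widetilde t)\|_X\to 0$ by continuity of $w$, and $|g(t,x)-g(\widetilde t,\widetilde x)|\to 0$ by continuity of $g$; hence both terms vanish and $f_\alpha$ is continuous.

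For the local Lipschitz property, fix $(\widetilde t,\widetilde x)$ and choose $r>0$ small enough that the $t$-projection of $B_r(\widetilde t,\widetilde x)$ lies in a compact subinterval on which $H$ is $L$-Lipschitz in $x$ and $\|w(t)\|_X\leq M$. Then for any $(t,x),(t,y)\in B_r(\widetilde t,\widetilde x)$,
$$\|f_\alpha(t,x)-f_\alpha(t,y)\|_X=|g(t,x)-g(t,y)|\,\|w(t)\|_X\leq L\,M\,\|x-y\|_X,$$
which is exactly the required bound, with Lipschitz constant $LM$.

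The only point demanding care --- and the step I expect to be the main obstacle --- is securing the bound $\|w(t)\|_X\leq M$ used in both arguments, since a priori $w(t)=cD_t^\alpha u(t)$ might be unbounded as $t\to 1^-$. This is precisely why the estimates are localized: for a point with $\widetilde t<1$ one shrinks $r$ so that the relevant $t$-values remain inside a compact subinterval of $[0,1)$, on which the continuous function $w$ is automatically bounded, so its behaviour near $t=1$ never enters. With this localization the three facts combine directly, requiring no estimate beyond the local Lipschitz bound furnished by Lemma \ref{funcaoH}.
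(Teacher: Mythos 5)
Your proposal is correct on $[0,1)\times X$ and takes essentially the same route as the paper: the paper's own (two-line) proof is exactly your factorization $f_\alpha=(\phi\circ H)\,cD_t^\alpha u$, deducing continuity from continuity of the three ingredients, and the local Lipschitz property from the global $1$-Lipschitz bound for $\phi$, the local Lipschitz estimate for $H$ in Lemma \ref{funcaoH}, and the local boundedness of $cD_t^\alpha u(t)$ --- which the paper extracts from the locally finite sum in Lemma \ref{solution}, and you extract, equivalently, from continuity of $cD_t^\alpha u$ on compact subintervals of $[0,1)$. Your write-up simply makes explicit the product estimates that the paper leaves implicit.

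One caveat, which you partly flag yourself: your localization only treats base points with $\widetilde t<1$, while the lemma asserts the properties on all of $\mathbb{R}^+\times X$. For $\widetilde t>1$ the argument is harmless (the vector factor vanishes there under the characteristic-function description of $cD_t^\alpha u$ in Lemma \ref{solution}), but at $\widetilde t=1$ no shrinking of the ball avoids $t$-values approaching $1$ from the left, where $cD_t^\alpha u$ is only defined on $[0,1)$ and --- by the analysis of Section \ref{S4}, where property (P) is proved in Hilbert spaces --- is expected to be unbounded; so a point $(1,x)$ with $\phi(H(1,x))\neq 0$ is not covered by this argument. This blind spot is, however, present verbatim in the paper's own proof, which says nothing about $t\geq1$ either, so it is a defect inherited from the statement rather than one introduced by your proof.
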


\begin{proof} The function $f_\alpha$ is continuous since $\phi(t)$, $cD_t^\alpha u(t)$ and $H(t,x)$ are continuous. As $\phi(t)$ is Lipschitz and $H(t,x)$ is locally Lipschitz by Lemma \ref{funcaoH}, it follows from description (limited number of members in the sum) of $cD_t^\alpha u(t)$ in Lemma \ref{solution} that  $f_\alpha$ is locally Lipschitz.
\end{proof}

Next theorem is the main result of this section, which completely answer the Affirmation.

\begin{theorem}[Sharpness of ``Blow Up'' Conditions]\label{counterexample}Consider $\alpha\in(0,1)$. Then there exists $f_\alpha:\mathbb{R}^+\times X\rightarrow X$ continuous and locally Lipschitz which does not map every bounded set into bounded set, such that problem
  \begin{equation}\tag{FCP}
   \left\{\begin{array}{l}
   cD^\alpha_tu(t)=f_\alpha(t,u(t)),\,\,\,t> 0\\
   u(0)=v_1\in X,
   \end{array} \right.
  \end{equation}
 where $cD_t^\alpha$ is the Caputo's fractional derivative and $v_1$ is the first element of the Schauder basis defined before in this section, posses a bounded maximal solution $u:[0,1)\rightarrow X$.
\end{theorem}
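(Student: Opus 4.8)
The plan is to show that the function $u$ built in Lemma \ref{solution} is exactly the sought bounded maximal solution once it is driven by the nonlinearity $f_\alpha$ of Lemma \ref{funcaof}. Everything reduces to three checks: that $u$ solves \eqref{ode2} with this $f_\alpha$ and the datum $u(0)=v_1$; that $u$ is bounded and maximal on $[0,1)$; and that $f_\alpha$ fails to carry bounded sets onto bounded sets. The regularity $u\in C^\alpha([0,\tau],X)$ for every $\tau<1$ and the continuity of $cD_t^\alpha u$ are already furnished by Lemma \ref{solution}, so no new analytic work is needed there.

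First I would verify the initial condition and the equation. Since $z_1\equiv 1$ and $z_n(0)=0$ for $n\geq 2$, one gets $u(0)=v_1$. For the equation, the decisive point is biorthogonality $v_i^*(v_n)=\delta_{in}$, which gives $V_i^*(u(t))=v_i^*(u(t))=z_i(t)\geq 0$ for all $i$ and all $t\in[0,1)$. Feeding this into the formula for $H$ in Lemma \ref{funcaoH}, every summand is a product of nonnegative quantities (the weights $i^{-2}$, the affine factors $\frac{t-t_i}{t_{i+1}-t_i}\in[0,1)$, the characteristic functions, and $z_i(t)$), so $H(t,u(t))\geq 0$. As $\phi(s)=\min\{s+1,1\}=1$ for $s\geq 0$, this forces $\phi(H(t,u(t)))=1$, whence $f_\alpha(t,u(t))=cD_t^\alpha u(t)$. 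Thus $u$ satisfies the equation on $[0,1)$, and is a local solution on every $[0,\tau]$ with $\tau<1$.

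Next comes maximality and boundedness. Boundedness is read off Lemma \ref{solution}: at most four basis terms are active at any instant and $\|z_n(t)v_n\|_X\leq 1$, so $\|u(t)\|_X\leq 4$. For maximality, Theorem \ref{exisuni1} gives uniqueness of the local solution on each $[0,\tau]$, $\tau<1$, so $u$ is \emph{the} unique local solution there; and Lemma \ref{solution} shows $u$ admits no continuous extension to $[0,1]$. Any continuation in the sense of Definition \ref{3.31} to an interval $[0,\tau^*]$ with $\tau^*\geq 1$ would be continuous at $t=1$, contradicting this. Hence $u$ is the maximal local solution on $[0,1)$.

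Finally, for the negative property of $f_\alpha$ I would argue by contradiction against Theorem \ref{odefractheo}. By Lemma \ref{funcaof}, $f_\alpha$ is continuous and locally Lipschitz, so if it \emph{also} mapped bounded sets onto bounded sets, the dichotomy of that theorem would force the maximal solution either to be global or to satisfy $\limsup_{t\to\omega^-}\|u(t)\|_X=\infty$; but our maximal solution lives on $[0,1)$ with $\omega=1<\infty$ and is bounded, excluding both alternatives. Hence $f_\alpha$ cannot map every bounded set onto a bounded set. Alternatively, one argues directly: on the bounded set $[0,1)\times\{0\}$ one has $f_\alpha(t,0)=cD_t^\alpha u(t)$, and the increasingly steep transitions of $z_n$ imposed by part (i) of Corollary \ref{goodfunction2} yield $\|cD_t^\alpha u(t_n)\|_X\geq |cD_t^\alpha z_n(t_n)|\to\infty$, so the image is unbounded. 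The step I expect to be the main obstacle is the equation check in the second paragraph, namely confirming $H(t,u(t))\geq 0$ so that the cutoff $\phi$ stays inactive along the trajectory: this is precisely what turns $u$ into a genuine solution, and it requires matching the bookkeeping of which indices are active on each $[t_k,t_{k+1})$ with the index shifts in the formulas for $H$ and for $cD_t^\alpha u$.
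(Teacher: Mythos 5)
Your proof is correct and takes essentially the same route as the paper's: you verify via biorthogonality that $H(t,u(t))\geq 0$, hence $\phi(H(t,u(t)))=1$ and $f_\alpha(t,u(t))=cD_t^\alpha u(t)$, and then combine the boundedness and non-extendability of $u$ from Lemma \ref{solution} with a contradiction against Theorem \ref{odefractheo} to conclude that $f_\alpha$ cannot map bounded sets onto bounded sets --- exactly the paper's argument, with the paper merely terser on the initial condition and on maximality, which you correctly fill in. Your alternative direct unboundedness argument does not appear in the paper's proof of this theorem but anticipates its Section \ref{S4}, where the authors only establish such a blow-up of $cD_t^\alpha u$ under hypothesis (P) (verified for Hilbert spaces); it is worth noting that your sketch, evaluated at the points $t_n$ rather than at the maxima $\tau_n$, can in fact be completed in a general Banach space via
$\|cD_t^\alpha u(t)\big|_{t=t_n}\|_X\geq |v_n^*(cD_t^\alpha u(t)\big|_{t=t_n})|=|cD_t^\alpha z_n(t)\big|_{t=t_n}|\geq \Gamma(1-\alpha)^{-1}\left(\tfrac{t_n-t_{n-1}}{2}\right)^{-\alpha}\rightarrow\infty$,
using $\|v_n^*\|_{X_0^*}=1$ from Theorem \ref{3.660}, the fact that $z_n'\geq 0$ with total mass $1$ on $\left[\tfrac{t_{n-1}+t_n}{2},t_n\right]$ and vanishes on $\left[0,\tfrac{t_{n-1}+t_n}{2}\right]$, so this sharpens rather than weakens the paper's treatment.
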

\begin{proof} If we define $f_\alpha$ as in Lemma \ref{funcaof}, then it is continuous and locally Lipschitz. Now, it is not difficult to notice that if $u:[0,1)\rightarrow X$ is given as in Lemma \ref{solution}, we deduce
\begin{equation*}H(t,u(t))=\left\{\begin{array}{ll}z_1(t),&\textrm{ for }t\in[0,t_1),\vspace{0.2cm}\\
z_1(t)+2^{-2}z_2(t)\left(\dfrac{t-t_{1}}{t_{2}-t_1}\right),&\textrm{ for }t\in[t_1,t_2),\vspace{0.2cm}\\
\displaystyle\sum_{i=1}^{k}{i^{-2}z_{i}(t)}+(k+1)^{-2}z_{k+1}(t)\left(\dfrac{t-t_{k}}{t_{k+1}-t_k}\right),&\hspace*{-0,2cm}\textrm{ for }t\in[t_k,t_{k+1}).
\end{array}\right.\end{equation*}
and since $z_k(t)\geq0$, we achieve that $f_\alpha(t,u(t))=cD_t^\alpha u(t)$, which means that $u$ is a solution to problem \eqref{ode2}. Finally $f_\alpha$ does not map bounded sets into bounds sets, since it would contradict Theorem \ref{odefractheo} once $u$ is a bounded maximal solution of \eqref{ode2}.\end{proof}

\section{Final Remarks on Theorem \ref{counterexample}}
\label{S4}

This final section is dedicated to introduce a constructive proof to Theorem \ref{counterexample}. More specifically, we want to exhibit the bounded set which is mapped by $f_\alpha$ into an unbounded set.

To begin this approach, consider $\{\tau_n\}_{n=1}^\infty$, with $\tau_{n}\in[(t_{n-1}+t_n)/2,t_n)$, the maximum value of $z^\prime_{n}(t)$ (which is strictly positive as described by Corollary \ref{goodfunction2}), for each $n\geq2$.

Fix $\alpha\in(0,1)$ and now consider the following hypothesis\vspace{0.2cm}:
\begin{equation*}\textrm{(P)}\left\{\begin{array}{c}\textrm{ There exists } \{\tau_{n_l}\}_{l=1}^\infty\subset\{\tau_{n}\}_{n=1}^\infty \textrm{ such that }\vspace*{0.3cm}\\
\lim_{l\rightarrow\infty}\left\|cD_t^\alpha u(t)\Big|_{t=\tau_{n_l}}\right\|_X=\infty.

\end{array}\right.
\end{equation*}

Observe that hypothesis (P) is not completely artificial as it seems. Recall initially that Proposition \ref{goodfunction} and Corollary \ref{goodfunction2} allow us to verify that the family of functions
$$\{z_n^\prime(t):n\in\mathbb{N}\}$$
should posses an increasing property concerning its maximum in the interval $[(t_{n-1}+t_n)/2,t_n)$, once for each higher value of $n$ the length of the interval $[t_{n-1},t_n)$ should be smaller what implies that the growth of function $z_n^\prime(t)$ should be each time bigger. However proving this assertion to general Banach spaces is a hard task.

To prove the following result, we also need to suppose that $t_1+t_2>1$. 

\begin{proposition} If $X$ is a Hilbert space, then property $(P)$ holds.
\end{proposition}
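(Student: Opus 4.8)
The plan is to reduce the vector-valued statement to a scalar estimate on a single Fourier-type coefficient, and for this the Hilbert structure is used in an essential way. First I would observe that in a Hilbert space the basis $\{v_n\}$ supplied by Theorem \ref{3.660} is automatically \emph{orthonormal}: representing $v_n^*=\langle\,\cdot\,,w_n\rangle$ through the Riesz isomorphism, the normalizations $\|v_n\|_X=\|v_n^*\|_{X_0^*}=1$ combined with the biorthogonality $\langle v_n,w_n\rangle=v_n^*(v_n)=1$ force equality in the Cauchy--Schwarz inequality, whence $w_n=v_n$ and $\langle v_m,v_n\rangle=\delta_{mn}$. Consequently Parseval's identity applies to the expansion of $cD_t^\alpha u$ obtained in Lemma \ref{solution}.

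Next I would evaluate at $t=\tau_n$. Since $\tau_n\in\big((t_{n-1}+t_n)/2,t_n\big)$, only the characteristic functions $\chi_{[t_j,1)}$ with $j\le n-1$ are active, so Lemma \ref{solution} (together with $cD_t^\alpha z_k=J_t^{1-\alpha}z_k'$ from Remark \ref{prop}) gives $cD_t^\alpha u(\tau_n)=\sum_{k=2}^{n}\big(J_t^{1-\alpha}z_k'\big)(\tau_n)\,v_k$, and orthonormality yields $\|cD_t^\alpha u(\tau_n)\|_X^2=\sum_{k=2}^{n}\big|(J_t^{1-\alpha}z_k')(\tau_n)\big|^2\ge \big|(J_t^{1-\alpha}z_n')(\tau_n)\big|^2$. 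It therefore suffices to show that the diagonal coefficient $(J_t^{1-\alpha}z_n')(\tau_n)$ is unbounded along a subsequence. The merit of retaining only this term is that inside $[0,\tau_n]$ the derivative $z_n'$ is supported on the single rising interval $[a_n,\tau_n]$, with $a_n:=(t_{n-1}+t_n)/2$, where $z_n'\ge0$; all oscillating contributions of the lower-index bumps are discarded and no cancellation must be tracked.

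The core estimate is then purely scalar. Using $z_n(a_n)=0$ and the monotonicity of the kernel, $(J_t^{1-\alpha}z_n')(\tau_n)=\frac{1}{\Gamma(1-\alpha)}\int_{a_n}^{\tau_n}(\tau_n-r)^{-\alpha}z_n'(r)\,dr\ge\frac{(\tau_n-a_n)^{-\alpha}}{\Gamma(1-\alpha)}\,z_n(\tau_n)$, and since $\tau_n-a_n\le(t_n-t_{n-1})/2\to0$ the prefactor $(\tau_n-a_n)^{-\alpha}$ diverges. Here the hypothesis $t_1+t_2>1$ enters: it guarantees $a_n\ge a_2=(t_1+t_2)/2>\tfrac12$, so every $\tau_n$ lies in $(\tfrac12,1)$, which is what makes the comparison of the singular weight uniform in $n$ and allows the peak value $z_n(\tau_n)$ to be estimated against the shrinking length $t_n-t_{n-1}$ prescribed by Corollary \ref{goodfunction2}. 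Assembling these bounds produces a subsequence $\{\tau_{n_l}\}$ along which $\|cD_t^\alpha u(\tau_{n_l})\|_X\to\infty$, which is exactly property $(P)$; as in Theorem \ref{counterexample} this pinpoints the bounded set $u([0,1))$ whose image under $f_\alpha$ is unbounded.

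The main obstacle is precisely this last estimate: bounding the diagonal coefficient from below uniformly in $n$. The displayed inequality reduces everything to showing that $(\tau_n-a_n)^{-\alpha}z_n(\tau_n)$ is unbounded, i.e. that the maximum of $z_n'$ is not attained so early that $z_n(\tau_n)$ decays faster than $(\tau_n-a_n)^{\alpha}$. Establishing this requires quantitative control of the profile of $z_n$ near its point of steepest ascent — information available from the explicit $\theta$-based construction underlying Proposition \ref{goodfunction} and Corollary \ref{goodfunction2}, and which the condition $t_1+t_2>1$ together with the decreasing-gap property renders tractable. I expect the bulk of the technical effort to be concentrated exactly here, with the functional-analytic reduction (orthonormality and Parseval) being comparatively routine.
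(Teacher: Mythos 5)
Your functional-analytic reduction is correct, and in fact it supplies a justification the paper silently omits: the equality-in-Cauchy--Schwarz argument showing that any normalized basis of Theorem \ref{3.660} with norm-one coordinate functionals is automatically orthonormal in a Hilbert space is exactly what legitimizes the paper's opening identity $\|cD_t^\alpha u(t)|_{t=\tau_n}\|_X^2=\sum_{k=2}^n\big[cD_t^\alpha z_k(t)|_{t=\tau_n}\big]^2$. Your formula $cD_t^\alpha u(\tau_n)=\sum_{k=2}^n(J_t^{1-\alpha}z_k')(\tau_n)v_k$ and the diagonal lower bound $(J_t^{1-\alpha}z_n')(\tau_n)\geq \Gamma(1-\alpha)^{-1}(\tau_n-a_n)^{-\alpha}z_n(\tau_n)$ are also sound. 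But the proposal is not a proof: everything hinges on the final, unproven claim that $(\tau_n-a_n)^{-\alpha}z_n(\tau_n)$ is unbounded along a subsequence, and this does \emph{not} follow from the properties of $z_n$ recorded in Proposition \ref{goodfunction} and Corollary \ref{goodfunction2}, which only prescribe where $z_n$ equals $0$, equals $1$, or lies in $(0,1)$. It is perfectly consistent with those constraints that the maximum of $z_n'$ is attained very early in the rising interval, with $z_n(\tau_n)$ decaying faster than $(\tau_n-a_n)^\alpha$ (a profile with a sharp spike of slope at low height followed by a slow climb to $1$), in which case your retained diagonal term stays bounded and the argument collapses. You flag this yourself as ``the bulk of the technical effort,'' but that postponed estimate is precisely the mathematical content of the proposition; note also that $t_1+t_2>1$ does nothing for your diagonal bound, since $(\tau_n-a_n)^{-\alpha}\to\infty$ regardless of where the intervals sit.

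The paper closes the argument by a genuinely different tactic that avoids all profile information, and it is worth seeing why. Arguing by contradiction from $\|cD_t^\alpha u(t)|_{t=\tau_n}\|_X\leq M$, it fixes $k$ and bounds the $k$-th coefficient for large $n$ using only the total variation of $z_k$ on each monotone piece: the rising part contributes at least $\big(\tau_n-\tfrac{t_{k-1}+t_k}{2}\big)^{-\alpha}$ because $\int z_k'=1$ there and the kernel is monotone, while the falling part is bounded below by $-(\tau_n/2)^{-\alpha}$; the hypothesis $t_1+t_2>1$ enters to make this bracket positive (it forces $(t_{k-1}+t_k)/2>1/2$), and then letting $n\to\infty$ followed by $k\to\infty$ makes the lower bound $\Gamma(1-\alpha)^{-1}\big[\big(1-\tfrac{t_{k-1}+t_k}{2}\big)^{-\alpha}-2^{\alpha}\big]$ exceed $M$, the desired contradiction. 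No knowledge of where $z_k'$ peaks or how $z_k$ behaves at that point is ever needed. Your route can be repaired, but only by adding input from the explicit construction: on the rising interval $z_n'$ is proportional to $\mu_n(t)=\theta(t_n-t)\theta(t-a_n)$, which is symmetric about the midpoint, so $\tau_n$ is that midpoint and $z_n(\tau_n)=\tfrac12$; with this your bound gives divergence along the \emph{whole} sequence $\{\tau_n\}$, and since then only one coefficient is used, the inequality $|v_n^*(x)|\leq\|x\|_X$ would replace Parseval and the Hilbert hypothesis altogether --- a stronger conclusion than the paper's, but one your text gestures at without establishing. As written, the proposal has a genuine gap at its decisive step.
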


\begin{proof} Let $X$ be a Hilbert space. By definition of $cD_t^\alpha u(t)$ we observe
$$\Big\|cD_t^\alpha u(t)\Big|_{t=\tau_{n}}\Big\|^2_X=\sum_{k=2}^n{\Big[cD_t^\alpha z_k(t)\Big|_{t=\tau_{n}}\Big]^2},$$
for each $n\geq2$.

Now by assuming that property $(P)$ does not hold, there should exists $M>0$ such that
\begin{equation}\label{finalsec01}\Big\|cD_t^\alpha u(t)\Big|_{t=\tau_{n}}\Big\|_X\leq M,\quad\textrm{ for each }n\in\mathbb{N}.\end{equation}

Thus, for $n>2$ and by Remark \ref{prop}, it holds that
\begin{multline}\label{finalsec02}
cD_t^\alpha z_k(t)\Big|_{t=\tau_{n}}=\displaystyle\dfrac{1}{\Gamma(1-\alpha)}\int_{0}^{\tau_{n}}{(\tau_{n}-s)^{-\alpha}z_k^\prime(s)}\,ds\\=
\displaystyle\dfrac{1}{\Gamma(1-\alpha)}\left[\int_{\frac{t_{k-1}+t_{k}}{2}}^{t_{k}}{(\tau_{n}-s)^{-\alpha}z_k^\prime(s)}\,ds
+\displaystyle\int_{t_{k+1}}^{\frac{t_{k+1}+t_{k+2}}{2}}{(\tau_{n}-s)^{-\alpha}z_k^\prime(s)}\,ds\right],
\end{multline}
for each $k\in[2,n-1]$. Now by recalling that $z^\prime_k(t)\geq0$ in $[(t_{k-1}+t_k)/2,t_{k}]$ and $z^\prime_k(t)\leq0$ in $[t_{k+1},(t_{k+1}+t_{k+2})/2]$, we deduce
\begin{multline}\label{finalsec03}\int_{\frac{t_{k-1}+t_{k}}{2}}^{t_{k}}{(\tau_{n}-s)^{-\alpha}z_k^\prime(s)}\,ds\\\geq
\left(\tau_{n}-\frac{t_{k-1}+t_{k}}{2}\right)^{-\alpha}\int_{\frac{t_{k-1}+t_{k}}{2}}^{t_{k}}{z_k^\prime(s)}\,ds= \left(\tau_{n}-\frac{t_{k-1}+t_{k}}{2}\right)^{-\alpha}
\end{multline}
and
\begin{multline}\label{finalsec04}\int_{t_{k+1}}^{\frac{t_{k+1}+t_{k+2}}{2}}{(\tau_{n}-s)^{-\alpha}z_k^\prime(s)}\,ds\\
\geq\left(\tau_{n}-\frac{\tau_{n}}{2}\right)^{-\alpha}\int_{t_{k+1}}^{\frac{t_{k+1}+t_{k+2}}{2}}{z_k^\prime(s)}\,ds
= -\left(\frac{\tau_{n}}{2}\right)^{-\alpha}.\end{multline}

Therefore, by \eqref{finalsec01}, \eqref{finalsec02}, \eqref{finalsec03} and \eqref{finalsec04}, we achieve the inequality
$$\dfrac{1}{\Gamma(1-\alpha)}\left[\left(\tau_{n}-\frac{t_{k-1}+t_{k}}{2}\right)^{-\alpha}-\left(\frac{\tau_{n}}{2}\right)^{-\alpha}\right]\leq \Big\|cD_t^\alpha u(t)\Big|_{t=\tau_{n}}\Big\|_X\leq M,$$
since $t_{k-1}+t_k\geq t_1+t_2>1$.

Finally by taking the limit when $n\rightarrow\infty$ in both sides of the above inequality we obtain
$$\dfrac{1}{\Gamma(1-\alpha)}\left[\left(1-\frac{(t_{k-1}+t_{k})}{2}\right)^{-\alpha}-\left(\frac{1}{2}\right)^{-\alpha}\right]\leq M,$$
for any $k\geq 2$. However, since $(t_{k-1}+t_{k})/2\rightarrow 1$ when $k\rightarrow\infty$, the value on the left side of the above inequality would be greater than $M$ for $k$ sufficiently large, which is a contradiction. This completes the proof of the proposition.
\end{proof}

Last result allow us to establish the following theorem.

\begin{theorem}[Sharpness of ``Blow Up'' Conditions Revisited] Let $\alpha\in(0,1)$ and assume that $(P)$ holds in $X$. Then there exists $f_\alpha:\mathbb{R}^+\times X\rightarrow X$ continuous and locally Lipschitz which does not map every bounded set into bounded set, such that
  \begin{equation}\tag{FCP}
   \left\{\begin{array}{l}
   cD^\alpha_tu(t)=f_\alpha(t,u(t)),\,\,\,t> 0\\
   u(0)=v_1\in X,
   \end{array} \right.
  \end{equation}
 where $cD_t^\alpha$ is the Caputo's fractional derivative of order $\alpha$ and $v_1$ is the first element of the Schauder basis defined in section $3$, posses a bounded maximal solution $u:[0,1)\rightarrow X$.
\end{theorem}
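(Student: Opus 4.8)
The plan is to reuse almost the entire construction behind Theorem \ref{counterexample}, changing only the final step so that the failure of $f_\alpha$ to map bounded sets into bounded sets becomes explicit rather than being deduced by contradiction from Theorem \ref{odefractheo}. First I would take $f_\alpha$ exactly as in Lemma \ref{funcaof}, so that continuity and the local Lipschitz property come for free, and I would take $u:[0,1)\to X$ as in Lemma \ref{solution}. Repeating the computation of $H(t,u(t))$ from the proof of Theorem \ref{counterexample} and using that each $z_k(t)\geq 0$, one obtains $\phi(H(t,u(t)))=1$, whence $f_\alpha(t,u(t))=cD_t^\alpha u(t)$; together with Lemma \ref{solution} this shows that $u$ is a bounded maximal solution on $[0,1)$ that cannot be extended to $[0,1]$.

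The genuinely new ingredient is to produce concretely a bounded subset of $\mathbb{R}^+\times X$ whose image under $f_\alpha$ is unbounded, which is exactly what hypothesis $(P)$ is designed to supply. The natural candidate is a subset of the graph of the solution, namely
$$B:=\big\{(\tau_{n_l},u(\tau_{n_l})):l\in\mathbb{N}\big\}\subset\mathbb{R}^+\times X,$$
where $\{\tau_{n_l}\}_{l=1}^\infty$ is the subsequence furnished by $(P)$. This set is bounded, since $\tau_{n_l}\in[0,1)$ and $u$ is bounded by Lemma \ref{solution}. Because $u$ solves the equation, $f_\alpha(\tau_{n_l},u(\tau_{n_l}))=cD_t^\alpha u(t)\big|_{t=\tau_{n_l}}$, and hypothesis $(P)$ asserts precisely that $\big\|cD_t^\alpha u(t)\big|_{t=\tau_{n_l}}\big\|_X\to\infty$ as $l\to\infty$. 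Hence $f_\alpha(B)$ is unbounded, so $f_\alpha$ does not map every bounded set into a bounded set, completing the argument.

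I do not expect a real obstacle here: all the analytic difficulty has already been absorbed into verifying $(P)$ (carried out for Hilbert spaces in the preceding Proposition) and into the construction of $f_\alpha$ and $u$. The only points deserving care are checking that $B$ is honestly bounded and that the identity $f_\alpha(t,u(t))=cD_t^\alpha u(t)$ holds at the sampling points $\tau_{n_l}$, both of which follow at once from the earlier lemmas. In effect the role of $(P)$ is exactly to render the counterexample constructive: it hands us the explicit sequence along which the graph of the bounded solution is blown up by $f_\alpha$.
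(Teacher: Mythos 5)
Your proof is correct, and it runs on the same engine as the paper's own argument: because $f_\alpha(t,x)=\phi(H(t,x))\,cD_t^\alpha u(t)$ factors with the second factor depending only on $t$, hypothesis $(P)$ makes $f_\alpha$ unbounded along \emph{any} bounded sequence $\{(\tau_{n_l},x_l)\}_{l=1}^\infty$ at which $\phi(H(\tau_{n_l},x_l))=1$; the rest (that $u$ is a bounded maximal solution) is carried over from Theorem \ref{counterexample} in both cases. The one genuine difference is the choice of witness set. The paper takes the auxiliary points $y_l=\sum_{k=1}^{l+1}k^{-2}v_k$, which forces a fresh (if easy) verification that $\phi(H(s_l,y_l))=1$, using that $V_i^*(y_l)=v_i^*(y_l)=i^{-2}\geq 0$ for $i\leq l+1$ and $=0$ otherwise in the formula of Lemma \ref{funcaoH}. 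You instead sample the graph of the solution itself, $B=\{(\tau_{n_l},u(\tau_{n_l}))\}_{l=1}^\infty$, where the identity $\phi(H(t,u(t)))=1$ — hence $f_\alpha(t,u(t))=cD_t^\alpha u(t)$ — was already established in the proof of Theorem \ref{counterexample} from $z_k(t)\geq 0$; boundedness of $B$ is immediate since $\tau_{n_l}\in[0,1)$ and $\|u(t)\|_X\leq 4$ by the explicit form \eqref{solution01}. Your variant is marginally more economical and arguably more natural (it exhibits the blow-up of $f_\alpha$ along the bounded solution's own graph, which is exactly the phenomenon being illustrated), while the paper's choice emphasizes that the unbounded image can be witnessed at points having nothing to do with $u$; both are equally valid, and neither requires anything beyond $(P)$ and the earlier lemmas.
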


\begin{proof} Here we discuss just the bounded set that is mapped by $f_\alpha$ into an unbounded set. Consider the sequence $\{(s_l,y_l)\}_{l=1}^\infty\subset[0,\infty)\times X$ such that
$$s_l=\tau_{n_l}\textrm{ and }y_l=\sum_{k=1}^{l+1}{(1/k^2)v_k},\textrm{ for }l\geq1,$$
where $\tau_{n_l}$ is given in hypothesis \textrm{(P)}.

It is not difficult to notice that $\{(s_l,y_l)\}_{l=1}^\infty$ is bounded, however
$$\|f_\alpha(s_l,y_l)\|_X=|\phi(H(s_l,y_l))|\,\left\|cD_t^\alpha u(t)\Big|_{t=\tau_{n_l}}\right\|_X$$
and since
$$|\phi(H(s_l,y_l))|=1,$$
the sequence $\{f_\alpha(s_l,y_l)\}_{l=1}^\infty$ is unbounded.\end{proof}

\section*{acknowledgements}
 Both authors would like to thank Universidade Federal do Esp\'{\i}rito Santo and Universidade Federal de Santa Catarina for the hospitality and support during respective short term visits.


\begin{thebibliography}{99}
%
\bibitem{Car1} {Carvalho-Neto,~P. ~M.}, {\it Fractional Differential Equations: a novel study of local and
global solutions in Banach spaces}, Ph.D. Thesis, Universidade de S\~{a}o Paulo, S\~{a}o Carlos, (2013).
%
\bibitem{CaPl1} {Carvalho-Neto,~P. ~M.~and Planas,~G.}, Mild Solutions to the Time Fractional Navier-
Stokes Equations in $R^N$, {\it J. Differ. Equations} 259 (2015) 2948--2980.
%
\bibitem{AnCaCaMa1}
{De Andrade,~B., Carvalho,~A. ~N., Carvalho-Neto,~P. ~M.~and Mar\'{\i}n-Rubio,~P.}, Semilinear fractional differential equations: global solutions, critical nonlinearities and comparison results, {\it Topol. Method. Nonl. An. } 45 (2015) 439--469.
%
\bibitem{Dei1}
 {Deimling,~K.}, {\it Ordinary Differential Equations in Banach Spaces}, Springer--Verlag, Berlin--Heidelberg--New York, 1877.
%
\bibitem{Dei2}
 {Deimling,~K.}, {\it Multivalued Differential Equations}, de Gruyter Series in Nonlinear Analysis and Applications, 1. Walter de Guyter and Co. Berlin, 1992.
%
\bibitem{DiFo1} Diethelm,~K.~and Ford,~N. ~J., Analysis of Fractional Differential Equations, {\it J. Math. Anal. Appl.} 265 (2002), 229--248.
%
\bibitem{Di1}
 {Dieudonn\'{e},~J.}, Deux exemples d'\'{e}quations singuliers diff\'{e}rentielles, {\it Acta Sci. Math. (Szeged)} 12B (1950), 38--40.
%
\bibitem{DuSc1} {Dunford,~N. ~and Schwartz,~J. ~T. }, {\it Linear operators, Part I, General Theory}, Interscience Publishers, New York, 1958.
%
\bibitem{En1}
 {Enflo,~P.}, A counterexample to the approximation problem in Banach spaces, {\it Acta Mathematica} 130 (1973) 309--317.
%
\bibitem{Hal1}
 {Hale,~J. ~K.}, {\it Ordinary Differential Equations}, Huntington, New York, 1980.
%
\bibitem{KiSrTr1} {Kilbas,~A. ~A., Srivastava,~H. ~M.~and Trujillo, ~J. ~J.}, {\it Theory and Applications of Fractional Differential Equations}, Elsevier, Amsterdam, 2006.
%
\bibitem{KoMaPiVa1}
 {Komornik, ~V., Martinez, ~P., Pierre, ~M.~and Vancostenoble, ~J.}, ``Blow-up'' of bounded solutions of differential equations, {\it Acta Sci. Math. (Szeged)} 69 (2003) 651--657.
%
\bibitem{LaPeVa1} L\"{a}tt, ~K., Pedas, ~A. ~and Vainikko, ~G., A Smooth Solution of a Singular Fractional Differential Equation, {\it Z. Anal. Anwend.} 34 (2015), 127--146.
%
\bibitem{LiPeJi1} { Li, ~K., Peng, ~J. ~and Jigen, ~J.}, Cauchy problems for fractional differential equations with Riemann-Liouville fractional derivatives, {\it J. Funct. Anal.} 263 (2012) 476--510.
%
\bibitem{LiTz1}
 { Lindenstrauss, ~J. ~and Tzafriri, ~L. }, {\it Classical Banach Spaces}, Spring-Verlag, Berlin, Heidelberg, New York, 1973.
 %
\bibitem{LiLi1}  { Liu, ~Z. ~and Li, ~X.}, Approximate controllability of fractional evolution systems with Riemann-Liouville fractional derivatives, {\it SIAM J. Control Optim.} 53 (2015) 1920--1933.
%
\bibitem{Pe1}
{Peano, ~G.}, Demonstration de l'int\'{e}grabilit\'{e} des \'{e}quations diff\'{e}rentielles ordinaires, {\it Mathematische Annalen} 37 (1890) 182--228.
%
\bibitem{St1} { Stamova, ~I. ~N. }, On the Lyapunov theory for functional differential equations of fractional order, {\it Proc. Amer. Math. Soc.} 144 (2016) 1581--1593.
%
\bibitem{WaChXi1} { Wang, ~R. ~N., Chen, ~D. ~H. ~and Xiao, ~T. ~J.}, Abstract fractional Cauchy problems with
almost sectorial operators, {\it J. Differ. Equations} 252 (2012) 202--235.
%
\bibitem{XuJiYa1} { Xu, ~X., Jiang, ~D. ~and Yuan, ~C.}, Multiple positive solutions to singular positone and semipositone Dirichlet-type boundary value problems of nonlinear fractional differential equations, {\it Nonlinear Anal.} 74 (2011) 5685--5696.
%
\end{thebibliography}
\end{document}